\newtheorem{Proposition}{Proposition}[section]
\newtheorem{Lemme}{Lemma}[section]
\newtheorem{Theoreme}{Theorem}[section]
\newtheorem{Corollaire}{Corollary}[section]
\newtheorem{Remarque}{Remark}
\def \R{\mathbb{R}}
\def \Rt{\mathbb{R}^3}
\def \T{\mathbb{T}}
\def \ds{\displaystyle}
\title[\bf Nonlocal Kuramoto-Sivashinsky-type model] %
{ Sharp Well-Posedness and Parameter Asymptotics for a Nonlocal Model of Thin Film Flows} 
\author[ Manuel Fernando Cortez,   Oscar Jarr\'in and Miguel Yangari]{}
\subjclass[2020]{Primary: 35A01, 35B30; Secondary: 35D35}
\keywords{Thin film models; Kuramoto-Sivashinsky-type equations; Sharp well-posedness; Parameter asymptotics in PDEs ; Mild formulation}
\email{manuel.cortez@epn.edu.ec} 
\email{oscar.jarrin@udla.edu.ec}
\email{miguel.yangari@epn.edu.ec} 
\thanks{$^*$Corresponding author: manuel.cortez@epn.edu.ec}
\begin{document}
	\maketitle
	\begin{center}
	\begin{minipage}{7cm}
 	\centerline{\scshape Manuel Fernando Cortez$^*$}
	\medskip
	{\footnotesize
		\centerline{Departamento de Matem\'aticas}
		\centerline{Escuela Politécnica Nacional} 
		\centerline{Ladr\'on de Guevera E11-253, Quito, Ecuador} 
	}
    \end{minipage}\hspace{2mm}
\begin{minipage}{7cm}
	\centerline{\scshape Oscar Jarr\'in}
	\medskip
	{\footnotesize
		\centerline{Escuela de Ciencias Físicas y Matemáticas}
		\centerline{Universidad de Las Américas}
		\centerline{V\'ia a Nay\'on, C.P.170124, Quito, Ecuador}
	} 
 \end{minipage} 
 \end{center} 

   \vspace{1cm}

   \centerline{\scshape Miguel Yangari}
	\medskip
	{\footnotesize
		\centerline{Departamento de Matem\'aticas}
		\centerline{Escuela Politécnica Nacional} 
		\centerline{Ladr\'on de Guevera E11-253, Quito, Ecuador} 
	}
	
\bigskip
\begin{abstract}  This work focuses on the mathematical analysis of the Cauchy problem associated with a two-dimensional equation describing the dynamics of a thin fluid film flowing down an inclined flat plate under the influence of gravity and an electric field.

As a first objective, we study the sharp global well-posedness of solutions within the framework of Sobolev spaces. Specifically, we show that the equation is well-posed in $H^s(\R^2)$ for $s>-2$, and ill-posed for $s<-2$.

Our main contribution is to investigate the asymptotic behavior of solutions with respect to the physical parameters of the model. This behavior is not only of mathematical interest but also of physical relevance. We rigorously show that, as the effects of the electric field vanish and the inclination angle increases, the model converges to a related one describing thin film flow down a vertical plane.
\end{abstract}
{\footnotesize \tableofcontents}
\section{Introduction to the model}\label{Sec:Intro}
Thin liquid films appear in various physical applications, especially in cooling and coating processes \cite{Aktershev,Lyu,Mascarenhas,Miyara,Shmerler,tomlin}. The main objective of mathematical models for thin liquid films is to understand the behavior of liquid layers with small thicknesses under a range of physical conditions.

\medskip

This article considers the following two-dimensional model, derived by R. Tomlin, D. Papageorgiou and G. Pavliotis in \cite{tomlin}, which describes the dynamics of a thin fluid film (F) \emph{flowing down an inclined flat plate} under the influence of \emph{gravity} (g) and an \emph{electric field} ($E_0$):
 \begin{equation*}
(gE_0 F)\qquad \partial_t u + u\, \partial_{x_1}u + (R - \kappa) \partial^2_{x_1} u -\kappa\,\partial^2_{x_2} u - \alpha (- \Delta)^{3/2}u + \mu\,\Delta^2 u = 0.
\end{equation*}

Here, the solution $u=u(t,x)\in \R$ represents the interfacial position of the thin liquid film. Specifically, for a very thin layer of liquid, this function gives the height (or thickness) of the film at each time $t$ and surface point 
$x=(x_1, x_2)$.

\medskip

\begin{minipage}{8cm}
The parameter $R>0$ denotes the \emph{Reynolds number}, which measures inertial effects due to \emph{gravity}. Additionally, the term $-\kappa\,\partial^2_{x_2}u$ essentially reflects the fact that the film is flowing down the \emph{inclined plate}, where, for a given \emph{inclination angle} $0<\theta \leq \frac{\pi}{2}$, we have  $\kappa := \cot(\theta)\geq 0$.

\medskip

The effects of the \emph{electric field} are modeled by the term $-\alpha (-\Delta)^{\frac{3}{2}}$, where the fractional power of the Laplacian operator is defined in Fourier variable by the symbol $|\xi|^3$. The physical parameter $0\leq \alpha \leq 2$ measures the \emph{strength of the electric field}. 
\end{minipage}\hspace{1cm}
\begin{minipage}{6cm}
    \begin{center}
        \includegraphics[scale=0.25]{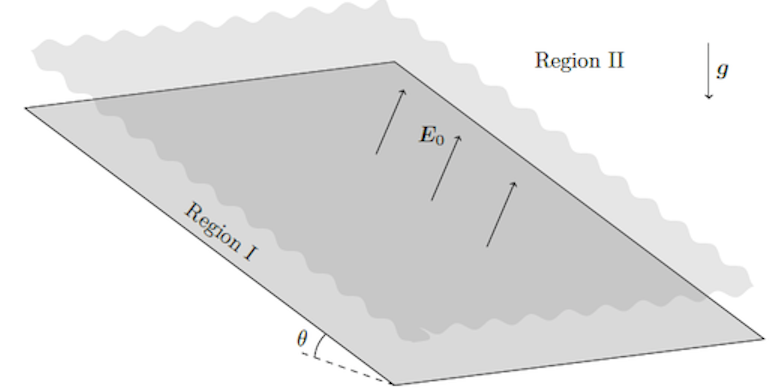}
    \end{center}
    {\tiny  {\bf Figure 1}: Schematic of the problem, taken from \cite{tomlin}. Here, $\theta$ denotes the inclination angle, $E_0$ the electric field, and $g$ the gravitational field.}
\end{minipage}

\medskip

\medskip

Finally, the term $\mu \Delta^2 u$ models effects of the fluid, with a viscosity constant $\mu >0$. With a minor loss of generality, from now on we fix $\mu=1$ since this parameter does not play any substantial role in our study.  On the other hand, the nonlinear term $u\, \partial_{x_1}u $  describes   nonlinear long-wave disturbances at the interface.  For a complete physical derivation and description of this model, refer to \cite[Section 2]{tomlin}.

\medskip

From a mathematical point of view, the $(gE_0 F)$ equation  corresponds to a nonlocal Kuramoto–Sivashinsky-type equation. In the periodic setting of the two-dimensional torus $\T^2$, R. Granero-Belinchón and J. He studied the initial value problem for this equation in \cite{Rafael}. Specifically, for any time $T>0$, the authors used high-order energy estimates to prove the existence and uniqueness of strong solutions:
\[ u\in \mathcal{C}([0,T],H^2(\T^2))\cap L^2([0,T],H^4(\T^2)), \]
with initial data in $H^2(\T^2)$. Furthermore, by working in the Gevrey class, they showed the instantaneous analyticity of solutions, provided the initial data $u_0=u_0(x_1,x_2)$ is odd with respect to the variable $x_1$.

\medskip

The long-time behavior of solutions to the $(gE_0 F)$ equation  is also addressed in \cite{Rafael}. Without loss of generality, one can fix an inclination angle $\theta=\frac{\pi}{4}$, which yields that $\kappa=1$.  As numerically observed in \cite{tomlin}, the destabilizing effects of the term $-\alpha (-\Delta)^{\frac{3}{2}}$ interact with the stabilizing effects of the term $(R - 1) \partial^2_{x_1}$ in the subcritical case $0<R<1$, or with the additional destabilizing effects of that same term in the supercritical case $R>1$, yielding rich dynamical behavior. In this context, the authors of \cite{Rafael} used Lyapunov functionals to demonstrate the dominant dissipative nature of the $(gE_0 F)$ equation. This fact  leads to uniform bounds for solutions in the space $L^2(\T^2)$, and therefore, the existence of a global attractor. Additionally, some interesting features of the chaotic behavior of solutions are studied using the number of spatial oscillations. See also, \cite{craster,Rafael2,Nicolaenko,Pinto1,Pinto2} for related works on the time dynamics of solutions to Kuramoto–Sivashinsky-type models.  

\medskip 

In this article, we also conduct a mathematical analysis of the  equation $(gE_0 F)$, pursuing different directions of interest in the study of nonlinear partial differential equations. Our approach primarily exploits the structure of \emph{mild solutions}, as defined in expression (\ref{integralKS}) below. In particular, we derive sharp properties of the convolution kernel—given explicitly in expression (\ref{kernel})—which arises in this mild formulation and originates from the linear operator:
\begin{equation*}
(R - \kappa) \partial^2_{x_1} - \kappa\, \partial^2_{x_2} - \alpha (- \Delta)^{3/2} + \Delta^2.
\end{equation*}

As a first direction, we use these properties to study the sharp globally-well-posedness of solutions to the $(gE_0 F)$ equation  within the framework of Sobolev spaces. Specifically, in the non-periodic setting of the full space $\R^2$, we show that the equation is well-posed in $H^s(\R^2)$ for $s>-2$, and ill-posed for $s<-2$.

\medskip

Thereafter, we investigate the asymptotic behavior of solutions to the $(gE_0 F)$ equation  with respect to the parameters $R,\kappa$ and  $\alpha$.  This behavior is not only of mathematical interest but also of phenomenological relevance. Returning to the physical derivation of the $(gE_0 F)$ equation presented in \cite[Section 2]{tomlin}, it is noted that in the absence of electric field effects,  when $\alpha=0$, and considering a vertical substrate, when $\theta=\frac{\pi}{2}$ and therefore $\kappa=0$, the $(gE_0 F)$ equation can be rescaled to the following model:
\begin{equation*}
(vF)\qquad \partial_t u + u \partial_{x_1} u + \partial^2_{x_1} u + \Delta^2 u = 0.
\end{equation*}
This equation was previously derived by A.A. Nepomnyashchy in \cite{Nepomnyashchy,Nepomnyashchy2} and was numerically studied in \cite{Chang} to describe the flow of a thin film (F) down a vertical plane (v).

\medskip

In this setting, we rigorously demonstrate that solutions of the $(gE_0 F)$  equation converge to those of the $(vF)$ equation  as $R\to 1$, $\kappa\to 0$ and $\alpha \to 0$. We also derive an explicit convergence rate, which reveals several phenomena discussed in detail below.

\section{Main results}\label{Sec:Results}
We focus on the Cauchy problem for the $(gE_0 F)$  equation  in two dimensions in the non-periodic case:

\begin{equation}
\label{EcuacionKS}
\begin{cases}\vspace{2mm}
\partial_t u + u\, \partial_{x_1} u + (R - \kappa) \partial^2_{x_1} u - \kappa\,\partial^2_{x_2} u - \alpha (- \Delta)^{3/2}u + \Delta^2 u = 0, \\
u(0,\cdot) = u_0.
\end{cases}
\end{equation}

\medskip

We recall that the equation  (\ref{EcuacionKS})  is locally well-posed in the space $H^s(\R^2)$ (with $s\in \R$), if for any initial datum $u_0 \in H^s(\R^2)$ there exists a time $T=T(\Vert u_0 \Vert_{H^s})>0$ and there exists  a unique solution $u(t,x)$ to the equation   (\ref{EcuacionKS})  in a space 
\[ E_{T}\subset  \mathcal{C}([0,T], H^s(\R^2)), \]
 such that  the flow-map data-solution: 
\begin{equation}
\label{Map-flow}
S: H^s(\R^2) \to E_{T} \subset  \mathcal{C}([0,T], H^s(\R^2)),  \quad u_0 \mapsto S(t)u_0= u(t,\cdot),
\end{equation}
is a locally continuous  function from $H^s(\R^2)$ to $E_{T}$.  We further recall that this equation is globally well-posed in $H^s(\R^{2})$ if the above properties hold for any time $T>0$. In this context, our first result is the following:

\begin{Theoreme}[Global-well-posedness and smoothing effects]\label{Th:GWP} Let fixed $R>0$, $\kappa\geq 0$ and $ \alpha \geq 0$. The equation (\ref{EcuacionKS}) is globally-well-posed in the space $H^s(\R^2)$, with $s>-2$. Additionally, for any $\sigma>s$ and  any time $T>0$, the solution $u\in E_T$ verifies 
\begin{equation}\label{Regularity}
u \in \mathcal{C}(]0,T], H^\sigma(\R^2)),    
\end{equation}
and the flow-map function $S$ defined in (\ref{Map-flow}) is smooth.
\end{Theoreme}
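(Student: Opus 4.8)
The plan is to solve (\ref{EcuacionKS}) by a Picard/Banach fixed point on its mild formulation (\ref{integralKS}). Writing $\mathcal{L}=(R-\kappa)\partial^2_{x_1}-\kappa\,\partial^2_{x_2}-\alpha(-\Delta)^{3/2}+\Delta^2$ and $u\,\partial_{x_1}u=\tfrac12\partial_{x_1}(u^2)$, a solution is a fixed point of $\Phi(u)(t)=e^{-t\mathcal{L}}u_0-\tfrac12\int_0^t e^{-(t-\tau)\mathcal{L}}\partial_{x_1}(u^2)(\tau)\,d\tau$. The Fourier multiplier of $e^{-t\mathcal L}$ is $\exp\big(t[(R-\kappa)\xi_1^2-\kappa\,\xi_2^2+\alpha|\xi|^3-|\xi|^4]\big)$, in which the leading $-|\xi|^4$ dominates the lower order (possibly \emph{destabilizing}) terms. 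From the explicit kernel (\ref{kernel}) I first record the pointwise bound $|\widehat{e^{-t\mathcal L}f}(\xi)|\le C e^{\omega t}e^{-\frac t2|\xi|^4}|\hat f(\xi)|$ and deduce the parabolic smoothing estimates $\|e^{-t\mathcal L}f\|_{H^{\sigma+a}}\le C e^{\omega t}\,t^{-a/4}\|f\|_{H^\sigma}$ and $\|e^{-t\mathcal L}\partial_{x_1}f\|_{H^{\sigma}}\le C e^{\omega t}\,t^{-(1+\sigma-\sigma')/4}\|f\|_{H^{\sigma'}}$ for $a\ge0$ and $\sigma-\sigma'\ge -1$. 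These quantify a gain of four spatial derivatives per unit of time weight.

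The scaling $u\mapsto\lambda^3u(\lambda^4 t,\lambda x)$ leaves the leading balance invariant and is critical exactly in $\dot H^{-2}(\R^2)$, so $s>-2$ is subcritical and should be reachable by contraction. For fixed $s\in(-2,1]$ (the range $s>1$ being the classical algebra argument) I work in the scaling-adapted space $E_T=\{u\in\mathcal C([0,T],H^s): \sup_{0<t\le T}t^{\gamma}\|u(t)\|_{H^\rho}<\infty\}$ with an auxiliary exponent $\rho\ge0$ and $\gamma=(\rho-s)/4$, so that the linear term obeys $t^\gamma\|e^{-t\mathcal L}u_0\|_{H^\rho}\lesssim\|u_0\|_{H^s}$. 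The core is the bilinear bound for $B(u,v)=\tfrac12\int_0^t e^{-(t-\tau)\mathcal L}\partial_{x_1}(uv)\,d\tau$: combining the smoothing estimate (which absorbs $\partial_{x_1}$ and the regularity jump from an intermediate exponent $\rho_3$) with a \emph{sharp} Sobolev product law $\|uv\|_{H^{\rho_3}}\lesssim\|u\|_{H^\rho}\|v\|_{H^\rho}$ reduces the Duhamel term, after inserting $\|u(\tau)\|_{H^\rho}\le\tau^{-\gamma}\|u\|_{E_T}$, to a time convolution $\int_0^t(t-\tau)^{-(1+\rho-\rho_3)/4}\tau^{-2\gamma}\,d\tau$, a Beta integral.

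The main obstacle is to choose $\rho,\rho_3$ so that this Beta integral converges, with a nonnegative remaining power of $t$, precisely on the whole subcritical range. Convergence at $\tau\to0$ forces $2\gamma<1$, i.e.\ $\rho<s+2$, while the product law forces $\rho_3<-1$ (with the low frequencies of $uv$ controlled through $H^\rho\hookrightarrow L^2$, which needs $\rho\ge0$) and the smoothing integrability at $\tau\to t$ forces $\rho_3>\rho-3$; reconciling the binding pair $\rho\ge0$ and $\rho<s+2$ is possible if and only if $s+2>0$. This is exactly where subcriticality enters, and it requires the \emph{sharp} (paraproduct-type) product law rather than the crude algebra bound $H^\rho\!\cdot\!H^\rho\hookrightarrow H^\rho$, which would only reach $s>-1$; the destabilizing low-order symbols are harmless, contributing only the factor $e^{\omega t}$. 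With the exponents fixed, $B:E_T\times E_T\to E_T$ is bounded and, since each estimate carries a strictly positive power of $t$, its norm is small for $T$ small, so the contraction theorem yields a unique local solution. Globalization then follows because the nonlinearity is in divergence form, whence the energy identity gives $\frac{d}{dt}\|u\|_{L^2}^2=2\int[(R-\kappa)\xi_1^2-\kappa\xi_2^2+\alpha|\xi|^3-|\xi|^4]|\hat u|^2\,d\xi\le 2C_0\|u\|_{L^2}^2$ and hence $\|u(t)\|_{L^2}\le e^{C_0 t}\|u_0\|_{L^2}$; combined with the instantaneous smoothing into $L^2$, this a priori bound excludes finite-time blow-up and upgrades the local solution to a global one for every $T>0$.

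Finally, the smoothing (\ref{Regularity}) follows by bootstrap: reinserting $u(t)\in H^\rho$ for $t>0$ into $\Phi$ and using $\|e^{-(t-\tau)\mathcal L}\partial_{x_1}\|_{H^{\rho'}\to H^{\rho''}}\lesssim(t-\tau)^{-(1+\rho''-\rho')/4}$ raises the regularity by a fixed amount at each pass; iterating reaches every $\sigma>s$, and the super-polynomial factor $e^{-\frac t2|\xi|^4}$ makes $u(t)$ arbitrarily smooth for $t>0$, which also yields continuity into $H^\sigma$ away from $t=0$. The smoothness of the flow map $S$ comes from the analytic implicit function theorem applied to $F(u_0,u)=u-e^{-t\mathcal L}u_0+\tfrac12\int_0^t e^{-(t-\tau)\mathcal L}\partial_{x_1}(u^2)\,d\tau$: this map is real-analytic on $H^s\times E_T$ since its only nonlinearity is the bounded quadratic $B(u,u)$, and $\partial_u F=\mathrm{Id}-2B(u,\cdot)$ is invertible on $E_T$ after localizing in time so that $\|2B(u,\cdot)\|<1$; hence $u_0\mapsto S(t)u_0$ is smooth (indeed analytic), completing the proof.
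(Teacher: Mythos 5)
Your proposal is correct and follows essentially the same route as the paper: the mild formulation with the kernel bound $|\widehat{K}(t,\xi)|\lesssim e^{\omega t}e^{-\frac{t}{2}|\xi|^4}$ and the resulting parabolic smoothing estimates, a contraction in a time-weighted space whose auxiliary norm $t^{(\rho-s)/4}\|u(t)\|_{H^{\rho}}$ with $\rho\geq 0$ is exactly the paper's $t^{|s|/4}\|u(t)\|_{L^2}$, the product estimate via $\|\widehat{u}\ast\widehat{u}\|_{L^\infty}\leq\|u\|_{L^2}^2$ (your $H^{\rho}\cdot H^{\rho}\hookrightarrow H^{\rho_3}$, $\rho_3<-1$), the Beta-integral computation that pins the threshold at $s>-2$, the $L^2$ energy inequality for globalization, a bootstrap for the smoothing \eqref{Regularity}, and Picard/analytic-implicit-function arguments for the smoothness of $S$. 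The only (cosmetic) difference is that the paper splits the cases at $s=0$ and uses a three-component norm for $s>0$, whereas you cover $-2<s<2$ in one scaling-adapted space and defer $s>1$ to the algebra argument.
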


The following comments are in order. Note that in (\ref{Regularity}), we also establish an instantaneous smoothing effect for solutions. Therefore, following standard ideas as used in \cite[Proposition 4.2]{CorJar1}, it holds that
\[ u \in \mathcal{C}^1\big(]0,+\infty[, \mathcal{C}^{\infty}(\R^2) \big), \]
and hence $u(t,x)$ is a classical solution of equation (\ref{EcuacionKS}).

\medskip

On the other hand, as previously stated, for any time $T>0$, the global-in-time solution $u(t,x)$ to equation (\ref{EcuacionKS}) is the unique solution in a space $E_T \subset \mathcal{C}([0,T], H^s(\R^2))$, which is explicitly defined in expressions (\ref{Functional-Space-1}) and (\ref{Functional-Space-2}) below, with $s>-2$. Nevertheless, larger values of the parameter $s$ allow us to prove the following:

\begin{Corollaire}[Stronger uniqueness]\label{Cor:Uniqueness}  Under the same hypothesis of Theorem \ref{Th:GWP}, consider $s>2$ and $u_0 \in H^s(\R^s)$.  For any time $T>0$,  the  solution $u\in E_T$ of equation (\ref{EcuacionKS}) is the unique one in the larger space $\mathcal{C}([0,T], H^s(\R^2))$.
\end{Corollaire}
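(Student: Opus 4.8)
The plan is to prove an \emph{unconditional} uniqueness statement by a Duhamel plus singular Grönwall argument, comparing an arbitrary solution $v\in\mathcal C([0,T],H^s(\R^2))$ with the solution $u\in E_T$ produced by Theorem \ref{Th:GWP}. First I would check that, for $s>2$, any solution in $\mathcal C([0,T],H^s)$ necessarily satisfies the mild formulation (\ref{integralKS}). This is precisely where the threshold $s>2$ enters: for such $s$ one has $\partial_{x_1}v\in\mathcal C([0,T],H^{s-1})$ with $s-1>1=\tfrac d2$, so $H^{s-1}(\R^2)$ is an algebra and the nonlinear term $v\,\partial_{x_1}v=\tfrac12\partial_{x_1}(v^2)$ is a well-defined element of $\mathcal C([0,T],H^{s-1})$. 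This makes the variation-of-constants formula legitimate for $v$ \emph{without} appealing to the weighted/smoothing structure that defines $E_T$; the solution $u$ satisfies (\ref{integralKS}) by construction.

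Next I would set $w=u-v$ and use the bilinear identity $u\,\partial_{x_1}u-v\,\partial_{x_1}v=\tfrac12\partial_{x_1}\big((u+v)\,w\big)$ to write
\[
w(t)=-\tfrac12\int_0^t K_{t-\tau}*\partial_{x_1}\big((u+v)\,w\big)(\tau)\,d\tau,
\]
the linear parts cancelling since the initial data coincide. The core ingredient is the smoothing bound for the kernel (\ref{kernel}): because the symbol of the associated linear operator behaves like $|\xi|^4$ at high frequency while remaining bounded at low frequency, the multiplier $\xi_1\,e^{-(t-\tau)m(\xi)}$ of $K_{t-\tau}*\partial_{x_1}$ satisfies $\sup_\xi|\xi|\,e^{-(t-\tau)|\xi|^4}\lesssim (t-\tau)^{-1/4}$, so that for every $\sigma$,
\[
\big\|K_{t-\tau}*\partial_{x_1}f\big\|_{H^\sigma}\;\lesssim\; e^{C(t-\tau)}\,(t-\tau)^{-1/4}\,\|f\|_{H^\sigma}.
\]

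Taking $\sigma=s$ and combining with the algebra property of $H^s(\R^2)$ (valid since $s>2>1$) yields $\|(u+v)w\|_{H^s}\lesssim(\|u\|_{H^s}+\|v\|_{H^s})\|w\|_{H^s}$. Writing $\phi(t)=\|w(t)\|_{H^s}$ and $M=\sup_{[0,T]}(\|u\|_{H^s}+\|v\|_{H^s})<\infty$, I obtain the weakly singular integral inequality
\[
\phi(t)\;\le\; C_T\,M\int_0^t (t-\tau)^{-1/4}\,\phi(\tau)\,d\tau,\qquad \phi(0)=0.
\]
A standard singular Grönwall (Gronwall--Henry) lemma, applied after iterating the kernel $(t-\tau)^{-1/4}$ finitely many times to absorb the singularity, then forces $\phi\equiv0$ on $[0,T]$, i.e. $u=v$.

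I expect the main obstacle to be the first step: rigorously justifying that a competitor known only in $\mathcal C([0,T],H^s)$—with no a priori control on the time-weighted or higher-regularity norms defining $E_T$—genuinely satisfies the integral equation (\ref{integralKS}), so that the difference $w$ can be handled by the kernel estimate. This is exactly where the hypothesis $s>2$ is indispensable, since it places the nonlinearity $v\,\partial_{x_1}v$ in the algebra range $H^{s-1}$ with $s-1>1$; a secondary technical point is the careful treatment of the weakly singular time integral, which is nonetheless locally integrable because the exponent $1/4<1$ reflects the fourth-order (biharmonic) leading term of the model.
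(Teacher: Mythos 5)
Your proposal is correct, but it follows a genuinely different route from the paper. The paper proves uniqueness by an $L^2$ energy argument on the difference $w=u-v$: it writes the equation satisfied by $w$, multiplies by $w$, integrates, controls the linear part via the pointwise symbol bound (\ref{Estim-Pointwise}) (as in (\ref{Estim-Tech-Energy})) and the nonlinear part via $\bigl|\int_{\R^2} w\,\partial_{x_1}u\,w\,dx\bigr|\leq \|w\|_{L^2}^2\|\nabla u\|_{L^\infty}\lesssim \|w\|_{L^2}^2\|u\|_{H^s}$ using the Sobolev embedding $H^{s-1}(\R^2)\subset L^\infty(\R^2)$ — this embedding is where the paper uses $s>2$ — and concludes with the classical Gr\"onwall lemma. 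You instead work at the mild-formulation level: you justify that any competitor in $\mathcal{C}([0,T],H^s)$ satisfies the Duhamel identity, use the bilinear splitting $u\partial_{x_1}u-v\partial_{x_1}v=\tfrac12\partial_{x_1}((u+v)w)$ together with the kernel smoothing bound (essentially Lemma \ref{Lem-Ker-2} with $s_1=1$) and the algebra property of $H^s$, and close with a singular Gr\"onwall iteration. Both arguments are sound. The paper's route avoids the (slightly delicate, but standard) verification that a distributional solution in $\mathcal{C}([0,T],H^s)$ obeys the integral equation, and needs only the classical Gr\"onwall lemma; yours controls the difference directly in the $H^s$ norm rather than only in $L^2$, reuses the kernel estimates already proved in Section \ref{Sec:GWP}, and in fact only seems to require $s>1$ (the product $H^s\cdot H^{s-1}\to H^{s-1}$ and the algebra property of $H^s$ both hold for $s>1$), so your claim that $s>2$ is the precise threshold for your step is not quite accurate — your argument is, if anything, slightly stronger than the stated corollary. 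Do spell out the passage from the PDE to the Duhamel formula and state the singular Gr\"onwall lemma you invoke; otherwise the proof is complete.
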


In Theorem \ref{Th:GWP}, the constraint $s>-2$ for proving the global well-posedness of equation (\ref{EcuacionKS}) in $H^s(\R^2)$ is optimal in the following sense:

\begin{Theoreme}[Sharp local-well-posedness]\label{Th:Sharp-LWP} Let fixed  $R>0$, $\kappa\geq 0$  and $\alpha\geq 0$. Let $s<-2$. If the equation (\ref{EcuacionKS}) is locally-well-posed in the space $H^s(\R^2)$ then the flow-map  $S$ defined in (\ref{Map-flow}) is not $\mathcal{C}^2$-function at the origin $u_0=0$. 
\end{Theoreme}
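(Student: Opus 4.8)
The plan is to argue by contradiction, using the standard principle that a $\mathcal{C}^2$ flow map forces the second variation of the Duhamel iteration to be a bounded bilinear operator on $H^s$, and then to defeat that bound with a high-frequency, two-bump profile that exploits the scaling supercriticality of $H^s(\R^2)$ for $s<-2$. As a preliminary, I would record the symbol of the linear part: writing (\ref{EcuacionKS}) as $\partial_t u = Lu - \tfrac12\partial_{x_1}(u^2)$, the operator $L$ acts in Fourier variables by the multiplier $p(\xi)=(R-\kappa)\xi_1^2-\kappa\,\xi_2^2+\alpha|\xi|^3-|\xi|^4$, so that the propagator attached to the kernel (\ref{kernel}) has symbol $e^{tp(\xi)}$, with $p(\xi)\sim -|\xi|^4$ at high frequency. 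A quick scaling check, $u_\lambda(t,x)=\lambda^3 u(\lambda^4 t,\lambda x)$, shows that $\|u_\lambda(0)\|_{\dot H^s}=\lambda^{s+2}\|u(0)\|_{\dot H^s}$, which already identifies $s=-2$ as the critical exponent and $s<-2$ as the supercritical range.

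Next I would set up the reduction. Assume (\ref{EcuacionKS}) is locally well-posed in $H^s(\R^2)$ with $s<-2$ and that $S$ is $\mathcal{C}^2$ at the origin. Since $u\equiv 0$ solves the equation we have $S(t)0=0$, and because $\|\delta\phi\|_{H^s}$ is small for small $\delta$ the existence time is uniform, so we may fix $T_0\in\,]0,T]$ and work with the continuous-linear evaluation $u_0\mapsto u(T_0)$. Differentiating the map $\delta\mapsto S(T_0)(\delta\phi)$ twice at $\delta=0$ in the mild formulation (\ref{integralKS}), the first variation is $e^{T_0 L}\phi$ and the second variation is
\[ u_2(T_0)=-\int_0^{T_0} e^{(T_0-\tau)L}\,\partial_{x_1}\big[(e^{\tau L}\phi)^2\big]\,d\tau. \]
The $\mathcal{C}^2$ hypothesis makes $\phi\mapsto u_2(T_0)$ a bounded symmetric bilinear map on $H^s$, so there is a finite constant $C$ with $\|u_2(T_0)\|_{H^s}\le C\,\|\phi\|_{H^s}^2$ for all $\phi$. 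The entire goal is then to violate this inequality along a suitable sequence.

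Computing $u_2$ in Fourier (interchanging the $\tau$- and $\eta$-integrations and carrying out the elementary time integral) yields
\[ \widehat{u_2}(T_0,\xi) = -i\xi_1\int_{\R^2}\widehat{\phi}(\eta)\,\widehat{\phi}(\xi-\eta)\,\frac{e^{T_0[p(\eta)+p(\xi-\eta)]}-e^{T_0\,p(\xi)}}{p(\eta)+p(\xi-\eta)-p(\xi)}\,d\eta. \]
I would then take $\widehat{\phi_N}=c_N\big(\mathbf{1}_{Q_N^+}+\mathbf{1}_{Q_N^-}\big)$, with $Q_N^{\pm}$ fixed-size boxes centered at $(\pm N,0)$ and $c_N$ chosen so that $\|\phi_N\|_{H^s}=1$, which gives $c_N\sim N^{-s}$. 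Restricting the output to the low-frequency region $|\xi|\lesssim 1$ and using the high–high$\to$low interaction ($\eta\approx(N,0)$, $\xi-\eta\approx(-N,0)$), the resonance function satisfies $p(\eta)+p(\xi-\eta)-p(\xi)\approx -2N^4$ (bounded away from zero, so no small divisors), while the numerator tends to $-e^{T_0 p(\xi)}$; hence the time multiplier is $\sim N^{-4}$. Because $\widehat{\phi_N}\ge 0$, the convolution $\widehat{\phi_N}*\widehat{\phi_N}$ is $\gtrsim c_N^2$ on a fixed neighbourhood of the origin with no cancellation, and together with the $|\xi_1|^2$ weight from $\partial_{x_1}$ this produces $\|u_2(T_0)\|_{H^s}\gtrsim N^{-4}c_N^2\sim N^{-2s-4}$. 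For $s<-2$ the exponent $-2s-4>0$, so the left-hand side blows up as $N\to\infty$ while $\|\phi_N\|_{H^s}^2=1$, contradicting the bilinear bound; therefore $S$ cannot be $\mathcal{C}^2$ at $u_0=0$.

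The main obstacle is the rigorous lower bound in the last step: one must verify that the low-frequency output genuinely dominates, controlling the exponentially small contribution $e^{-2T_0 N^4}$, the variation of $p$ and of $e^{T_0 p(\xi)}$ across the small output ball, and the box edges, so that the asserted asymptotics (multiplier $\sim N^{-4}$ and convolution $\gtrsim c_N^2$) hold uniformly and without cancellation — which is precisely why the nonnegative two-bump choice of $\widehat{\phi_N}$ is made. I would also double-check the normalisation $c_N\sim N^{-s}$ and confirm that the $N^{-4}$ gain from the quartic dissipation, offset by the derivative loss in the nonlinearity, reproduces exactly the threshold $s=-2$ predicted by the scaling $u_\lambda(t,x)=\lambda^3 u(\lambda^4 t,\lambda x)$.
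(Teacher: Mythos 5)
Your proposal is correct and follows essentially the same route as the paper: assume $\mathcal{C}^2$ regularity of the flow map, identify the second Fr\'echet derivative at the origin with the Duhamel bilinear term applied to two high-frequency Fourier bumps normalized so that $c_N\sim N^{-s}$ gives unit $H^s$ data, and extract a low-frequency output of size $\simeq N^{-2s-4}$ from the non-resonant high--high interaction (denominator $\simeq N^4$), which blows up for $s<-2$. The only cosmetic difference is that the paper places the two bumps off the $\xi_1$-axis (near $(-N,-N)$ and $(N+r,N+r)$) so the output region $[r,3r]^2$ keeps the $\xi_1$ multiplier bounded below pointwise, whereas your symmetric placement at $(\pm N,0)$ still works since $\xi_1$ vanishes only on a null set of the output box.
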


Setting $R=1$, $\kappa=0$ and $\alpha=0$, the initial value  problem (\ref{EcuacionKS}) reduces to the following Cauchy problem
\begin{equation}\label{Equation-alpha-zero}
\begin{cases}\vspace{2mm}
\partial_t u + u\, \partial_{x_1} u +  \partial^2_{x_1} u +  \Delta^2 u = 0, \\
u(0,\cdot)= u_0, 
\end{cases}	
\end{equation}
which corresponds to the $(vF)$ model. Therefore, to the best of our knowledge, all of the above results are also new for this equation.

\medskip

As mentioned, our main result is to rigorously study the convergence of solutions of equation (\ref{EcuacionKS}) to those of equation (\ref{Equation-alpha-zero}), as $(R,\kappa,\alpha)\to (1,0,0)$. For simplicity, from now on, we will restrict the range of the physical parameters $(R,\kappa,\alpha)$ to a region $Q_{*}\subset \R^3$, which is not too limiting from either a physical or mathematical point of view, and such that $(1,0,0)\in Q_{*}$.

\medskip

Let $0<\theta_{*}<\frac{\pi}{2}$ be a fixed angle, which always corresponds to a fluid film falling down an inclined plane. We constrain the inclination angle $\theta$ to satisfy
\[ \theta_{*} \leq \theta \leq \frac{\pi}{2}. \]
Defining $\kappa_{*}:=\cot(\theta_{*})$, it follows that $0\leq \kappa \leq \kappa_{*}$. Next, we restrict the Reynolds number $R$ to the range
\[0<R\leq \kappa_{*}+1, \]
which allows for both the stabilizing case ($R-\kappa<0$) and the destabilizing case ($R-\kappa>0$). Finally, as previously mentioned, the parameter $\alpha$ is already restricted to 
\[ 0\leq \alpha \leq 2. \]

Thus, we will work within the following region:
\begin{equation}\label{Region}
    (R,\kappa,\alpha)\in (0,\kappa_{*}+1]\times [0, \kappa_{*}]\times [0,2]:=Q_{*}.
\end{equation}

In this context, we introduce the vector ${\bf a} := (R, \kappa, \alpha) \in Q_{*} \setminus \{(1,0,0)\}$, which contains all the physical parameters in our model. We then denote by $u^{\bf a}_0 \in H^s(\mathbb{R}^2)$ an initial datum, and by $u^{\bf a}(t,x)$ the corresponding solution of equation (\ref{EcuacionKS}), constructed in Theorem \ref{Th:GWP}. Similarly, we denote ${\bf o} := (1,0,0)$, and define the initial datum $u^{\bf o}_0\in H^s(\R^2)$ and the corresponding solution $u^{\bf o}(t,x)$ of equation (\ref{Equation-alpha-zero}), also obtained from Theorem \ref{Th:GWP}. 

\medskip

We assume the convergence of the initial data in the strong topology of the Sobolev space $H^s(\R^2)$. This convergence is governed by a prescribed rate:
\begin{equation}\label{Convergence-Data}
\left\| u^{\bf a}_0 - u^{\bf o}_0 \right\|_{H^s} \leq C\, |{\bf a} - {\bf o}|^\gamma,
\end{equation}
where $C>0$ is a uniform constant independent of the parameters, and $\gamma>0$ indicates how rapidly the vector of parameters ${\bf a}=(R,\kappa,\alpha)$ approach the physically relevant limit ${\bf o}=(1,0,0)$. In addition, for any vector $z=(z_1,z_2,z_3)\in \R^3$, we denote by $|z|$ its euclidean norm. 

\medskip

With the main assumptions and notation in place, we are now ready to state the following  result:

\begin{Theoreme}[Parameter asymptotics]\label{Th:Nonlocal-to-local} Under the same hypothesis of Theorem \ref{Th:GWP}, assume that $s>1$. Additionally, assume (\ref{Convergence-Data}).  For any fixed time $T>0$, the following estimate holds:
\begin{equation}\label{Convergence-Solutions}
    \sup_{0\leq t \leq T}\| u^{{\bf a}}(t,\cdot)-u^{{\bf o}}(t,\cdot)\|_{H^s}\leq C\, e^{\eta\, T} \max\big( |{\bf a}-{\bf o}|^\gamma,\,  |{\bf a}-{\bf o}|\big), 
\end{equation}
with constants $C,\eta>0$ depending on $s$ and the parameter $\kappa_{*}$ introduced above. 
    \end{Theoreme}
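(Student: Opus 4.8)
The plan is to work entirely within the mild formulation (\ref{integralKS}), writing each solution via Duhamel's principle with the semigroup $e^{-tL_{\bf a}}$ generated by the linear operator of (\ref{EcuacionKS}), whose Fourier symbol is $\hat{L}_{\bf a}(\xi)=-(R-\kappa)\xi_1^2+\kappa\xi_2^2-\alpha|\xi|^3+|\xi|^4$, and using $u\,\partial_{x_1}u=\tfrac12\partial_{x_1}(u^2)$. Subtracting the two integral equations for $u^{\bf a}$ and $u^{\bf o}$ and setting $w:=u^{\bf a}-u^{\bf o}$, I would organize the difference into a linear contribution $e^{-tL_{\bf a}}u_0^{\bf a}-e^{-tL_{\bf o}}u_0^{\bf o}$ and a nonlinear (Duhamel) contribution, splitting each so as to isolate two mechanisms: (i) the \emph{same} semigroup acting on a difference of arguments, and (ii) the \emph{difference of semigroups} acting on a fixed function. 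Concretely, $e^{-tL_{\bf a}}u_0^{\bf a}-e^{-tL_{\bf o}}u_0^{\bf o}=e^{-tL_{\bf a}}(u_0^{\bf a}-u_0^{\bf o})+(e^{-tL_{\bf a}}-e^{-tL_{\bf o}})u_0^{\bf o}$, and analogously inside the time integral, using $(u^{\bf a})^2-(u^{\bf o})^2=w\,(u^{\bf a}+u^{\bf o})$.

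Two uniform Fourier-side estimates drive everything, both extracted from the analysis of the kernel (\ref{kernel}). First, a uniform dissipative bound $e^{-t\hat{L}_{\bf a}(\xi)}\le e^{Mt}\,e^{-ct|\xi|^4}$, with $M,c>0$ depending only on $\kappa_*$, valid for all ${\bf a}\in Q_*$ and for every convex interpolation of the symbols; this encodes that the quartic term dominates the (possibly destabilizing) lower-order terms outside a bounded frequency region, on which the symbol is bounded below. Second, a \emph{semigroup-difference} estimate: writing $e^{-t\hat{L}_{\bf a}}-e^{-t\hat{L}_{\bf o}}=-t(\hat{L}_{\bf a}-\hat{L}_{\bf o})\int_0^1 e^{-t\hat{L}_s}\,ds$ along the segment $\hat{L}_s$ joining the two symbols, and using $\hat{L}_{\bf a}-\hat{L}_{\bf o}=(1-R+\kappa)\xi_1^2+\kappa\xi_2^2-\alpha|\xi|^3$, hence $|\hat{L}_{\bf a}(\xi)-\hat{L}_{\bf o}(\xi)|\le C\,|{\bf a}-{\bf o}|\,(|\xi|^2+|\xi|^3)$, together with the first estimate yields a multiplier bounded by $C\,|{\bf a}-{\bf o}|\,e^{Mt}\,t(|\xi|^2+|\xi|^3)e^{-ct|\xi|^4}$. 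The parabolic scaling $y=t^{1/4}|\xi|$ gives $t(|\xi|^2+|\xi|^3)e^{-ct|\xi|^4}\le C(t^{1/2}+t^{1/4})$, so the difference of semigroups maps $H^s\to H^s$ with norm $\le C\,|{\bf a}-{\bf o}|\,e^{MT}(T^{1/2}+T^{1/4})$, \emph{without} costing regularity. This is what produces the linear factor $|{\bf a}-{\bf o}|$ and, together with the data bound (\ref{Convergence-Data}) controlling $e^{-tL_{\bf a}}(u_0^{\bf a}-u_0^{\bf o})$ by $C|{\bf a}-{\bf o}|^\gamma$, the $\max(|{\bf a}-{\bf o}|^\gamma,|{\bf a}-{\bf o}|)$ on the right-hand side of (\ref{Convergence-Solutions}).

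For the nonlinear terms I would additionally use the smoothing of $\partial_{x_1}e^{-\tau L_{\bf a}}$, whose symbol obeys $|\xi_1|e^{-\tau\hat{L}_{\bf a}(\xi)}\le C\,e^{M\tau}\tau^{-1/4}$, generating an integrable singularity $(t-\tau)^{-1/4}$. Since $s>1=d/2$, the space $H^s(\R^2)$ is a Banach algebra, so $\|w(u^{\bf a}+u^{\bf o})\|_{H^s}\le C\|w\|_{H^s}(\|u^{\bf a}\|_{H^s}+\|u^{\bf o}\|_{H^s})$ and $\|(u^{\bf o})^2\|_{H^s}\le C\|u^{\bf o}\|_{H^s}^2$; this is precisely where the hypothesis $s>1$ enters. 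Invoking the uniform-in-${\bf a}$ a priori bound $\sup_{[0,T]}\|u^{\bf a}\|_{H^s}\le M_T$ furnished by the construction in Theorem \ref{Th:GWP} (with constants uniform over the compact region $Q_*$ of (\ref{Region})), mechanism (ii) applied to $\partial_{x_1}((u^{\bf o})^2)$ contributes a term of size $C|{\bf a}-{\bf o}|e^{MT}$, while mechanism (i) contributes the self-referential integral. Collecting all pieces yields the weakly singular integral inequality
\[
\|w(t)\|_{H^s}\le C\,e^{MT}\max\big(|{\bf a}-{\bf o}|^\gamma,|{\bf a}-{\bf o}|\big)+C\,M_T\int_0^t (t-\tau)^{-1/4}e^{M(t-\tau)}\|w(\tau)\|_{H^s}\,d\tau,
\]
to which I would apply a singular Grönwall (Henry) lemma to obtain (\ref{Convergence-Solutions}) with some $\eta>0$ depending on $s$ and $\kappa_*$.

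The step I expect to be the main obstacle is the uniform semigroup-difference estimate: one must quantify the dependence of the \emph{generator} on ${\bf a}$ and convert it into a \emph{linear} rate $|{\bf a}-{\bf o}|$ in $H^s$, even though the symbol difference carries up to three extra powers of $|\xi|$ that the fixed argument $u^{\bf o}$ (which lives only in $H^s$) cannot absorb on its own. The resolution is to let the $e^{-ct|\xi|^4}$ smoothing pay for those derivatives through parabolic scaling and, crucially, to keep the constants $M,c$ uniform over the entire parameter region $Q_*$ and over all interpolating symbols $\hat{L}_s$; it is exactly here that the explicit description of the kernel (\ref{kernel}) and the geometry of $Q_*$ in (\ref{Region}) are indispensable.
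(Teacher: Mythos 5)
Your proposal follows essentially the same route as the paper: the same splitting of the Duhamel difference into ``same kernel on a difference'' plus ``difference of kernels on a fixed function,'' the same key Fourier-multiplier estimate showing that $\widehat{K^{\bf a}}-\widehat{K^{\bf o}}$ is $O(|{\bf a}-{\bf o}|)$ uniformly on $[0,T]$ (the paper obtains it via the mean value theorem in ${\bf a}$ with a uniform bound on $\nabla_{\bf a}\widehat{K^{\bf a}}$ over $Q_*$, which is the pointwise form of your integral along the segment of symbols), the same use of the Banach algebra property of $H^s(\R^2)$ for $s>1$, and the same weakly singular integral inequality with kernel $(t-\tau)^{-1/4}$. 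The only (inessential) divergence is at the very end, where you invoke a singular Gr\"onwall/Henry lemma while the paper absorbs the self-referential term by choosing a small subinterval with $C e^{\eta T}T_0^{3/4}\leq \tfrac12$ and iterating over $[0,T]$; both closings are valid.
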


The right-hand side of estimate (\ref{Convergence-Solutions}) quantifies the rate at which solutions of equation (\ref{EcuacionKS}) converge to those of equation (\ref{Equation-alpha-zero}) as ${\bf a} \to {\bf o}$. In this setting, it is natural to assume that $|{\bf a} - {\bf o}|<1$.  This convergence rate is governed by the interplay between the terms $|{\bf a}-{\bf o}|^\gamma$   and $|{\bf a}-{\bf o}|$. Specifically, we can distinguish two scenarios: when $0<\gamma<1$, we have 
\[  \max\big( |{\bf a}-{\bf o}|^\gamma,\,  |{\bf a}-{\bf o}|\big) = |{\bf a}-{\bf o}|^\gamma, \]
which implies that the solution converges at the same rate as the initial data. More interestingly, when $\gamma>2$, we find that
\[  \max\big( |{\bf a}-{\bf o}|^\gamma,\,  |{\bf a}-{\bf o}|\big) = |{\bf a}-{\bf o}|, \]
showing that faster convergence at the data level does not yield a corresponding improvement in the solution convergence.

\medskip

To shed light on this unexpected behavior, we revisit the mild formulation of the solutions presented in (\ref{integralKS}). The fundamental difference in the expressions for the solutions $u^{\bf a}(t,x)$ and $u^{{\bf o}}(t,x)$ arises from the kernels associated with the linear operators
\[ (R - \kappa) \partial^2_{x_1} - \kappa\, \partial^2_{x_2} - \alpha (- \Delta)^{3/2} + \Delta^2, \qquad \text{and} \qquad  \partial^2_{x_1}  + \Delta^2, \]
corresponding to equations (\ref{EcuacionKS}) and (\ref{Equation-alpha-zero}), respectively. In estimate (\ref{Conv-kernel-1}) below, we rigorously prove that these kernels converge at an optimal rate proportional to $|{\bf a}-{\bf o}|$. Consequently, the overall convergence rate of the solutions is determined by the combined effects of the convergence of the initial data and the kernel behavior in the mild formulation.

\medskip

On the other hand, the constraint $s>1$ is primarily a technical condition required to successfully manage the nonlinear terms in equations (\ref{EcuacionKS}) and (\ref{Equation-alpha-zero}). Nevertheless, by invoking the well-known Sobolev embedding $H^s(\R^2)\subset L^\infty(\R^2)$, along with standard interpolation inequalities in Lebesgue spaces, one can directly establish the following result:
\begin{Corollaire}[Parameter asymptotics in $L^p$-spaces]\label{Cor:Conv-Lp}  Under the same hypothesis of Theorem \ref{Th:Nonlocal-to-local}, for any $2\leq p \leq +\infty$ the following estimate holds:
\begin{equation*}
    \sup_{0\leq t \leq T}\| u^{{\bf a}}(t,\cdot)-u^{{\bf o}}(t,\cdot)\|_{L^p}\leq C\, e^{\eta\, T} \max\big( |{\bf a}-{\bf o}|^\gamma,\,  |{\bf a}-{\bf o}|\big),
\end{equation*}
with constants $C,\eta>0$ depending on $s, \kappa_{*}$ and $p$.
\end{Corollaire}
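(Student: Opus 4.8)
The plan is to deduce the $L^p$ bound directly from the $H^s$ estimate \eqref{Convergence-Solutions} of Theorem \ref{Th:Nonlocal-to-local} by a short interpolation argument, with no further analysis of the equation required. Writing $w(t,\cdot) := u^{\bf a}(t,\cdot) - u^{\bf o}(t,\cdot)$ for the difference of the two solutions, the entire matter reduces to establishing the pointwise-in-time inequality $\| w(t,\cdot)\|_{L^p} \leq C\, \| w(t,\cdot)\|_{H^s}$, with a constant $C=C(s,p)$ that is uniform in $t\in[0,T]$ and in the parameter vector ${\bf a}$.

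First I would recall the standard Lebesgue interpolation inequality: for any $2\leq p \leq +\infty$ and any $f\in L^2(\R^2)\cap L^\infty(\R^2)$,
\[
\| f\|_{L^p} \leq \| f\|_{L^2}^{2/p}\,\| f\|_{L^\infty}^{1-2/p},
\]
with the obvious conventions at the endpoints $p=2$ and $p=+\infty$. Each factor on the right is then controlled by the $H^s$ norm. For the $L^2$ factor this is immediate, since $s>1>0$ gives $\| f\|_{L^2}\leq \| f\|_{H^s}$. For the $L^\infty$ factor I would invoke the Sobolev embedding $H^s(\R^2)\hookrightarrow L^\infty(\R^2)$, which holds precisely in the regime $s>d/2=1$ (here $d=2$); this is exactly the role played by the technical hypothesis $s>1$ in Theorem \ref{Th:Nonlocal-to-local}. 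Thus $\| f\|_{L^\infty}\leq C_s\,\| f\|_{H^s}$, and multiplying the two bounds yields $\| f\|_{L^p}\leq C\,\| f\|_{H^s}$ uniformly over $2\leq p\leq +\infty$, with $C=C(s,p)$.

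Applying this inequality with $f=w(t,\cdot)$, taking the supremum over $t\in[0,T]$, and then inserting estimate \eqref{Convergence-Solutions} from Theorem \ref{Th:Nonlocal-to-local} gives
\[
\sup_{0\leq t\leq T}\| w(t,\cdot)\|_{L^p} \leq C \sup_{0\leq t\leq T}\| w(t,\cdot)\|_{H^s} \leq C\, e^{\eta\, T}\,\max\big(|{\bf a}-{\bf o}|^\gamma,\, |{\bf a}-{\bf o}|\big),
\]
which is exactly the claimed estimate, with the final constant $C$ now additionally depending on $p$ through the interpolation and embedding constants.

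There is no serious obstacle here: the argument is entirely soft and self-contained once Theorem \ref{Th:Nonlocal-to-local} is in hand. The only points that warrant a moment's care are that the interpolation and Sobolev constants are genuinely independent of $t$ and of ${\bf a}$ (they depend only on $s$, $p$ and the dimension $d=2$), so that the uniformity in the right-hand side of \eqref{Convergence-Solutions} is preserved, and that the embedding $H^s\hookrightarrow L^\infty$ is applied at $s>1$, matching the borderline threshold $d/2=1$ in two dimensions --- which is precisely why the hypothesis $s>1$ is assumed.
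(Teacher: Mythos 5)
Your argument is correct and is exactly the route the paper indicates: the authors omit the proof but state explicitly that it follows from the Sobolev embedding $H^s(\R^2)\subset L^\infty(\R^2)$ (valid since $s>1=d/2$) combined with standard Lebesgue interpolation between $L^2$ and $L^\infty$, applied to the difference and then to estimate (\ref{Convergence-Solutions}). Your write-up simply makes explicit what the paper leaves to the reader, including the correct observation that the embedding and interpolation constants are uniform in $t$ and ${\bf a}$.
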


To conclude this section, it is worth mentioning that both equations (\ref{EcuacionKS}) and (\ref{Equation-alpha-zero}) admit global-in-time solutions and possess an inherent notion of a global attractor, which has been studied in previous works. Building on some of the ideas discussed above, it would be of interest in future investigations to explore the strong convergence of the global attractors as ${\bf a} \to {\bf o}$.

\medskip

{\bf Organization of the Article and Notation.} In Section \ref{Sec:GWP}, we prove Theorem \ref{Th:GWP} and Corollary \ref{Cor:Uniqueness}, while Section \ref{Sec:Ill-Posedness} is devoted to the proof of Theorem \ref{Th:Sharp-LWP}. Subsequently, in Section \ref{Sec:Nonlocal-to-local}, we establish Theorem \ref{Th:Nonlocal-to-local}. As mentioned earlier, the proof of Corollary \ref{Cor:Conv-Lp} follows from standard arguments and will therefore be omitted for brevity.

\medskip

Regarding notation, both $\mathcal{F}(\varphi)$ and $\widehat{\varphi}$ denote the Fourier transform of $\varphi$, while $\mathcal{F}^{-1}(\varphi)$ denotes its inverse Fourier transform. Finally, we use $C$ to represent a generic positive constant, which may vary from line to line.

\section{Global-well-posedness and smoothing effects}\label{Sec:GWP}
\emph{Proof of  Theorem \ref{Th:GWP}.} We begin by defining the mild solution of \eqref{EcuacionKS}, through the integral formulation:
\begin{equation}\label{integralKS}
    u(t, \cdot) = K (t, \cdot) \ast u_0 - \int_0^t K(t - \tau, \cdot) \ast u\,\partial_{x_1} u (\tau,\cdot)  \, d\tau,
\end{equation}
where, for all \( t > 0 \) and \( \xi \in \mathbb{R}^2 \),  the kernel  $K(t,\cdot)$  associated with the linear part of the equation is explicitly given in the Fourier level by  the expression:
\begin{equation}
\label{kernel}
   \widehat{K}(t, \xi) :=  e^{ -\left(-(R-\kappa)\xi^2_1 +\kappa\, \xi^2_2 -\alpha|\xi|^3 +  |\xi|^4  \right) t}, \qquad R>0, \quad \kappa \geq 0, \quad \alpha\geq 0.
\end{equation}

For clarity, we divide the proof of  Theorem \ref{Th:GWP} into the following sections.

\medskip

{\bf Kernel estimates.}  In this section, we establish some useful properties of the kernel $ K(t, x)$ defined above.  
\begin{Lemme}\label{Lem-Ker-1} Let fixed $t>0$. For any $\lambda\geq 0$  it holds:
\begin{equation}\label{Estim-Ker-1}
    \left\|\, |\xi|^{\lambda}\, \widehat{K}(t,\cdot)\right\|_{L^\infty}\leq  C \frac{e^{\eta t}}{t^\frac{\lambda}{4}},
\end{equation}
where the  constants $C>0$ and $\eta>0$  depend on the parameters $R,\kappa,\alpha$ and $\lambda$.
\end{Lemme}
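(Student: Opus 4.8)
The plan is to read the bound directly off the explicit symbol in (\ref{kernel}). Writing $r=|\xi|$ and setting
\[ p(\xi) := |\xi|^4 - \alpha|\xi|^3 + \kappa\,\xi_2^2 - (R-\kappa)\xi_1^2, \]
so that $\widehat{K}(t,\xi) = e^{-t\,p(\xi)}$, the whole matter reduces to a lower bound on $p$. Since the only genuinely large term is the biharmonic contribution $|\xi|^4$, the strategy is to show that every lower-order term can be absorbed into a fraction of $|\xi|^4$ at the cost of an additive constant depending on the fixed parameters $R,\kappa,\alpha$; this additive constant is what will generate the factor $e^{\eta t}$.

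Concretely, I would first discard the favorable term $\kappa\,\xi_2^2 \geq 0$ and estimate the indefinite one by $-(R-\kappa)\xi_1^2 \geq -|R-\kappa|\,|\xi|^2$, giving $p(\xi) \geq |\xi|^4 - \alpha|\xi|^3 - |R-\kappa|\,|\xi|^2$. The right-hand side depends on $r=|\xi|$ alone, and since the one-variable polynomial $r \mapsto \alpha r^3 + |R-\kappa|\, r^2 - \tfrac12 r^4$ has negative leading coefficient it attains a finite maximum $\eta \geq 0$ on $[0,\infty)$. Hence $\alpha|\xi|^3 + |R-\kappa|\,|\xi|^2 \leq \tfrac12|\xi|^4 + \eta$, and we obtain the clean pointwise bound
\[ p(\xi) \geq \tfrac12 |\xi|^4 - \eta, \qquad \text{so that}\qquad \widehat{K}(t,\xi) \leq e^{\eta t}\,e^{-\frac{t}{2}|\xi|^4}. \]

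It then remains to evaluate $\sup_{\xi}|\xi|^\lambda e^{-\frac{t}{2}|\xi|^4}$. I would perform the scaling $s = \tfrac{t}{2}r^4$, which turns the supremum into $(2/t)^{\lambda/4}\sup_{s\geq 0} s^{\lambda/4} e^{-s}$; the remaining one-dimensional supremum is the finite constant $(\lambda/4)^{\lambda/4}e^{-\lambda/4} =: C_\lambda$, attained at $s=\lambda/4$. Multiplying by the prefactor $e^{\eta t}$ from the previous step yields exactly $\|\,|\xi|^\lambda \widehat{K}(t,\cdot)\|_{L^\infty} \leq C\, e^{\eta t}\, t^{-\lambda/4}$ with $C=C_\lambda\, 2^{\lambda/4}$, which is (\ref{Estim-Ker-1}).

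The computation is elementary, so there is no deep obstacle; the only point requiring care is the indefinite sign of $R-\kappa$, which forbids treating $-(R-\kappa)\xi_1^2$ as a sign-definite dissipative term and is precisely what forces the exponential prefactor $e^{\eta t}$ rather than a bound uniform in $t$. One should also note that the singularity of $|\xi|^3$ at the origin is harmless here, since the estimate is carried out in the radial variable $r=|\xi|\geq 0$ and only the growth of $p$ at infinity matters for the absorption step.
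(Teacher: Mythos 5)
Your proof is correct and follows essentially the same route as the paper: dominate the lower-order terms of the symbol by a fraction of $|\xi|^4$ plus an additive constant (which produces the factor $e^{\eta t}$), then obtain $t^{-\lambda/4}$ from the scaling $\xi\mapsto t^{1/4}\xi$. The only difference is organizational — you derive a single global pointwise bound $\widehat{K}(t,\xi)\leq e^{\eta t}e^{-\frac{t}{2}|\xi|^4}$, whereas the paper splits into low frequencies $|\xi|\leq M$ (bounded by $Ce^{\eta t}$) and high frequencies $|\xi|>M$ (bounded by $Ce^{-\eta|\xi|^4 t}$); your uniform bound is marginally cleaner since it avoids the final recombination step $Ce^{\eta t}+Ct^{-\lambda/4}\leq Ce^{\eta t}t^{-\lambda/4}$.
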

\begin{proof}
The constants $C$ and $\eta$ may vary from one line to the next, but they only depend on the parameters mentioned above.  In this context, coming back to the expression (\ref{kernel}),  note that one can find a quantity $M=M(R,\alpha)> 1$  such that
\begin{equation}\label{Estim-Pointwise}
 -\left(-(R-\kappa)\xi^2_1 +\kappa\, \xi^2_2 -\alpha|\xi|^3 +  |\xi|^4  \right) \leq -\eta |\xi|^4, \qquad |\xi|>M.   
\end{equation}
Indeed, let $M>R+\alpha+1$. For $|\xi|>M$, we have $M|\xi|^2\leq |\xi|^4$ and $M|\xi|^3<|\xi|^4$. Therefore, it follows that $R|\xi|^2 \leq \frac{R}{M}|\xi|^4$ and $\alpha|\xi|^3\leq \frac{\alpha}{M}|\xi|^4$. Thus,  we can write 
\begin{equation*}
    \begin{split}
    &\, (R-\kappa)\xi^2_1-\kappa\, \xi^2_2 +\alpha|\xi|^3- |\xi|^4 = R\, \xi^2_1 -\kappa(\xi^2_1+\xi^2_2) +\alpha|\xi|^3-|\xi|^4\\
    \leq &\,   R|\xi|^2+\alpha|\xi|^3 -|\xi|^4 \leq \, \left(-1 + \frac{R}{M}+ \frac{\alpha}{M}\right)|\xi|^4=-\left(1- \frac{R+\alpha}{M}\right)\, |\xi|^4.
    \end{split}
\end{equation*}
Here, we set $\eta> \left(1 -\frac{R+\alpha}{M}\right)$. Since, $M>R+\alpha+1$ we have that $ \left(1 -\frac{R+\alpha}{M}\right)>0$. 

\medskip

Consequently, for the expression $\widehat{K}(t,\xi)$ defined in (\ref{kernel}), we obtain the following estimates for both low and high frequencies:
\begin{equation}\label{Estim-Kernel-Pointwise}
|\widehat{K}(t,\xi)|\leq \begin{cases}\vspace{2mm}
C e^{\eta t}, & \quad |\xi|\leq M, \\
C e^{-\eta |\xi|^4 t},&  \quad |\xi|>M,
    \end{cases}    
    \qquad \quad  \text{where}\quad  M>R+\alpha+1, \quad \eta > \left(1 -\frac{R+\alpha}{M}\right).
\end{equation}

With these estimates, we can write
\begin{equation*}
\begin{split}
    \left\| \, |\xi|^\lambda \widehat{K}(t,\cdot)\right\|_{L^\infty} \leq  &\, \left\| \, |\xi|^\lambda \widehat{K}(t,\cdot)\right\|_{L^\infty(|\xi|\leq M)}+ \left\| \, |\xi|^\lambda \widehat{K}(t,\cdot)\right\|_{L^\infty(|\xi|> M)}\\
    \leq &\, Ce^{\eta t} + \frac{1}{t^{\frac{\lambda}{4}}} \left\| \,|t^{\frac{1}{4}}\xi|^\lambda e^{-\eta |\xi|^4 t}\right\|_{L^\infty(|\xi|>M)} \leq C e^{\eta t}+\frac{C}{t^{\frac{\lambda}{4}}} \leq  C\frac{e^{\eta t}}{t^{\frac{\lambda}{4}}}.
   \end{split}
\end{equation*}
\end{proof}
\begin{Remarque}\label{Rmk} When we consider the parameters $(R,\kappa,\alpha)\in Q_{*}$, where the region $Q_{*}$ is defined in (\ref{Region}), the same estimates (\ref{Estim-Kernel-Pointwise}) and (\ref{Estim-Ker-1}) hold with constants $C, M$ and $\eta$ depending only on the fixed constant $\kappa_{*}>0$.
\end{Remarque}
\begin{Lemme}\label{Lem-Ker-2} Let $s\in \R$ and $s_1\geq 0$. Then, it follows that: 
\begin{enumerate}
    \item For any time $t>0$,  we have 
    \[  \left\| K(t,\cdot)\ast \varphi \right\|_{H^{s+s_1}}\leq C\frac{e^{\eta t}}{t^{\frac{s_1}{4}}}\, \| \varphi\|_{H^s}. \]
 
    \item Let $T>0$ and  $\varepsilon>0$. For   any  $\varepsilon<t_1,t_2 <T$, we have
    \begin{equation*}
        \| K(t_1,\cdot)\ast \varphi - K(t_2,\cdot)\ast \varphi\|_{H^{s+s_1}}\leq C\, |t_1-t_2|\, \| \varphi\|_{H^s},
    \end{equation*}
\end{enumerate}
with constants $C>0$ and $\eta>0$ depending on the parameters $R,\kappa,\alpha,s_1, T$ and $\varepsilon$.
\end{Lemme}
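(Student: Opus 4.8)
The plan is to prove both estimates on the Fourier side, reducing everything to the uniform symbol bounds already recorded in Lemma \ref{Lem-Ker-1} and in the pointwise inequality \eqref{Estim-Pointwise}. Throughout, write $\widehat{K}(t,\xi)=e^{-P(\xi)t}$ with symbol
\[ P(\xi):= -(R-\kappa)\xi_1^2 + \kappa\,\xi_2^2 - \alpha|\xi|^3 + |\xi|^4, \]
so that \eqref{Estim-Pointwise} reads $P(\xi)\geq \eta|\xi|^4$ for $|\xi|>M$. By Plancherel's identity, for any $\varphi\in H^s(\R^2)$,
\[ \| K(t,\cdot)\ast\varphi\|_{H^{s+s_1}}^2 = \int_{\R^2} (1+|\xi|^2)^{s+s_1}\,|\widehat{K}(t,\xi)|^2\,|\widehat\varphi(\xi)|^2\,d\xi, \]
and analogously for the difference appearing in part (2). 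Both statements therefore amount to uniform-in-$\xi$ bounds on a Fourier multiplier, after which the $H^s$-norm of $\varphi$ factors out.

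For part (1), I would factor out the supremum and estimate
\[ \| K(t,\cdot)\ast\varphi\|_{H^{s+s_1}}^2 \leq \Big(\sup_{\xi}\,(1+|\xi|^2)^{s_1}\,|\widehat K(t,\xi)|^2\Big)\,\|\varphi\|_{H^s}^2. \]
Since $s_1\geq 0$ we have $(1+|\xi|^2)^{s_1/2}\leq C(1+|\xi|^{s_1})$, so the supremum is controlled by $\|\widehat K(t,\cdot)\|_{L^\infty}+\|\,|\xi|^{s_1}\widehat K(t,\cdot)\|_{L^\infty}$, which Lemma \ref{Lem-Ker-1} (with $\lambda=0$ and $\lambda=s_1$) bounds by $Ce^{\eta t}+Ce^{\eta t}t^{-s_1/4}$. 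The only bookkeeping is to absorb the first term into the second uniformly in $t>0$: since $t^{s_1/4}e^{\eta t}\leq Ce^{\tilde\eta t}$ for a slightly larger $\tilde\eta$, one gets $e^{\eta t}\leq Ce^{\tilde\eta t}t^{-s_1/4}$, giving the claimed bound after renaming $\eta$.

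For part (2), I would write the difference via the fundamental theorem of calculus,
\[ \widehat K(t_1,\xi)-\widehat K(t_2,\xi) = -\int_{t_2}^{t_1} P(\xi)\,e^{-P(\xi)\tau}\,d\tau, \]
so that, because $t_1,t_2\in(\varepsilon,T)$,
\[ |\widehat K(t_1,\xi)-\widehat K(t_2,\xi)| \leq |t_1-t_2|\,|P(\xi)|\sup_{\varepsilon\leq\tau\leq T}e^{-P(\xi)\tau}. \]
By Plancherel it then suffices to bound $(1+|\xi|^2)^{s_1/2}\,|P(\xi)|\sup_{\varepsilon\leq\tau\leq T}e^{-P(\xi)\tau}$ uniformly in $\xi$, which I would do by splitting into $|\xi|\leq M$ and $|\xi|>M$. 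On the compact region $|\xi|\leq M$ the polynomial $P$ and the weight are bounded and $e^{-P(\xi)\tau}\leq e^{|P(\xi)|T}\leq C$, so the product is $\leq C(M,s_1,T)$. On $|\xi|>M$, estimate \eqref{Estim-Pointwise} gives $P(\xi)\geq \eta|\xi|^4>0$, whence the supremum over $\tau$ is attained at $\tau=\varepsilon$ and is $\leq e^{-\eta\varepsilon|\xi|^4}$; combined with $|P(\xi)|\leq C|\xi|^4$ and $(1+|\xi|^2)^{s_1/2}\leq C|\xi|^{s_1}$, the product is dominated by $C|\xi|^{s_1+4}e^{-\eta\varepsilon|\xi|^4}$, which is bounded uniformly in $\xi$.

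The main obstacle is part (2). The delicate point is that the multiplier now carries a polynomial factor $|P(\xi)|\sim|\xi|^4$ produced by the time derivative, on top of the Sobolev weight $|\xi|^{s_1}$, and both must be absorbed by the Gaussian-type decay $e^{-\eta\varepsilon|\xi|^4}$. This is exactly where the strictly positive lower bound $\varepsilon$ on the time variables is indispensable: without it the decay would degenerate as $\tau\to 0$ and no uniform bound could survive, which is why the Lipschitz-in-time estimate is stated only for $\varepsilon<t_1,t_2<T$. The low-frequency region $|\xi|\leq M$, where $P$ may be negative and the exponential offers no decay, is harmless solely because it is compact, so there the whole multiplier is merely bounded.
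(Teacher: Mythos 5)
Your proof is correct and follows essentially the same route as the paper: both parts reduce to uniform $L^\infty$ bounds on the Fourier multiplier via the low/high-frequency splitting at $|\xi|=M$ from \eqref{Estim-Kernel-Pointwise}, with the $\varepsilon$-separation from $t=0$ used to absorb the $|\xi|^{s_1+4}$ growth into the quartic exponential decay. The only (cosmetic) difference is in part (2), where you apply the fundamental theorem of calculus directly to $e^{-P(\xi)t}$ rather than the paper's factorization $\widehat K(t_1,\xi)-\widehat K(t_2,\xi)=(\widehat K(t_1-t_2,\xi)-1)\widehat K(t_2,\xi)$ followed by the Mean Value Theorem; your version is, if anything, slightly more direct, and your explicit absorption $e^{\eta t}\le C e^{\tilde\eta t}t^{-s_1/4}$ in part (1) tidies a step the paper leaves implicit.
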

\begin{proof}
As before,  the constants $C$ and $\eta$ vary from one line to the next, but they only depend on the parameters mentioned above. 

\medskip

To verify the first point, we apply Lemma \ref{Lem-Ker-1}  (first  with $\lambda=0$ and the with $\lambda=s_1$) to directly write
    \begin{equation*}
        \begin{split}
            \| K(t,\cdot)\ast \varphi\|_{H^{s+s_1}}= &\, \left( \int_{\R^2} (1+|\xi|^2)^{s_1}|\widehat{K}(t,\xi)|^2\, (1+|\xi|^2)^s|\widehat{\varphi}(\xi)|^2\, d\xi\right)^{\frac{1}{2}}\\
            \leq & \, \| (1+|\xi|^2)^{\frac{s_1}{2}}\widehat{K}(t,\cdot)\|_{L^\infty}\, \| \varphi\|_{H^s}\leq C \left( e^{\eta t}+ \frac{e^{\eta t}}{t^{\frac{s_1}{4}}}\right) \|\varphi\|_{H^s} \leq  C\, \frac{e^{\eta t}}{t^\frac{s_1}{4}}\|\varphi\|_{H^s}.
        \end{split}
    \end{equation*}

For the second point, we write
\begin{equation*}
\begin{split}
  \| K(t_1,\cdot)\ast \varphi - K(t_2,\cdot)\ast \varphi\|_{H^{s+s_1}} = &\, \left(\int_{\R^2} (1+|\xi|^2)^{s_1}|\widehat{K}(t_1,\xi)-\widehat{K}(t_2,\xi)|^2\, (1+|\xi|^2)^s|\widehat{\varphi}(\xi)|^2\, d\xi\right)^{\frac{1}{2}}\\
  =&\, \left( \int_{\R^2} (1+|\xi|^2)^{s_1}|\widehat{K}(t_1-t_2,\xi)-1|^2\, |\widehat{K}(t_2,\xi)|^2\, (1+|\xi|^2)^s|\widehat{\varphi}(\xi)|^2\, d\xi\right)^{\frac{1}{2}}.
  \end{split}
\end{equation*}
To control the term $|\widehat{K}(t_1-t_2,\xi)-1|$, without loss of generality that  $t_1-t_2>0$. Thus, by expression (\ref{kernel}) and the Mean Value Theorem applied in the time variable,  there exists a time $0<t_0<t_1-t_2$ such that
\begin{equation*}
     |\widehat{K}(t_1-t_2,\xi)-1| \leq \,  C e^{ -\left(-(R-\kappa)\xi^2_1+\kappa\xi^2_2 - \alpha |\xi|^3+|\xi|^4 \right) t_0}\times\Big|-(R-\kappa)\xi^2_1+\kappa\xi^2_2 - \alpha |\xi|^3+|\xi|^4 \Big|\, |t_1-t_2|.
\end{equation*}
Additionally, by estimate (\ref{Estim-Ker-1}) (with $\lambda=0$) we obtain that  
\[ e^{ -\left( -(R-\kappa)\xi^2_1+\kappa\xi^2_2 - \alpha |\xi|^3+|\xi|^4  \right) t_0} \leq C e^{t_0}\leq C e^{T}= C. \]
 Then, we write
\begin{equation*}
\begin{split}
  |\widehat{K}(t_1-t_2,\xi)-1| \leq &\, C  \Big|  -(R-\kappa)\xi^2_1+\kappa\xi^2_2 - \alpha |\xi|^3+|\xi|^4  \Big|\, |t_1-t_2|\leq  C \left( 1+|\xi|^4\right)\, |t_1-t_2|, 
\end{split}
\end{equation*}
where, since $0\leq \alpha \leq 2$, the constant $C$  depends only on $R$.  Hence, we have 
\begin{equation*}
   |\widehat{K}(t_1-t_2,\xi)-1|^2 \leq C  \left( 1+|\xi|^8\right)\, |t_1-t_2|^2\leq C\left( 1+|\xi|^2\right)^4\, |t_1-t_2|^2.
\end{equation*}

With this estimate,  we come back to the previous identity to write
\begin{equation*}
  \| K(t_1,\cdot)\ast \varphi - K(t_2,\cdot)\ast \varphi\|_{H^{s+s_1}} \leq C\,|t_1-t_2| \, \left\| (1+|\xi|)^{s_1+4} \, \widehat{K}(t_2,\cdot)\right\|_{L^\infty}\, \| \varphi\|_{H^s}.  
\end{equation*}
Finally, using again the estimate (\ref{Estim-Ker-1}) (first with $\lambda=0$ and then with $\lambda=s_1+4$) and the fact that $\varepsilon<t_2<T$, we obtain
\begin{equation*}
\left\| (1+|\xi|)^{s_1+4} \, \widehat{K}(t_2,\cdot)\right\|_{L^\infty}\leq C e^{\eta t_2}+ C\, \frac{e^{\eta t_2}}{t_2^{s_1+4}}\leq C\, \frac{e^{\eta T}}{\varepsilon^{s_1+4}}=C,    
\end{equation*}
from which the wished estimate follows. 
\end{proof}

{\bf Local well-posedness.}  Using the kernel estimates derived above, we prove the local well-posedness of equation (\ref{EcuacionKS}) in the space $H^s(\R^2)$.
\begin{Proposition}\label{Prop-LWP}
Let fixed  $R>0$, $\kappa\geq 0$ and  $\alpha\geq 0$. Let $s>-2$ and $u_0 \in H^s(\R^2)$ be the initial datum. There exists a time 
\begin{equation}
    T_0(\| u_0 \|_{H^s})= \begin{cases}\vspace{3mm} 
    \ds{\min\left(1, \frac{1}{(8C\|u_0\|_{H^s})^{\frac{4}{s+2}}} \right)},& \quad -2<s\leq 0, \\
    \ds{\min\left(1, \frac{1}{(8C\|u_0\|_{H^s})^{\frac{4}{s_1+2}}} \right)},& \quad -2<s_1\leq 0 <s,
    \end{cases}
\end{equation}
where the constant $C$ depends  on $R,\kappa,\alpha,s$ and $s_1$,  a functional space $E_{T_0} \subset \mathcal{C}\big([0,T_0],H^s(\R^2)\big)$, defined in (\ref{Functional-Space-1}) and (\ref{Functional-Space-2}), and a function   $u \in E_{T_0}$, which is the unique solution to the integral equation (\ref{integralKS}). 
\end{Proposition}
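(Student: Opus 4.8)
The plan is to construct the solution as a fixed point of the Duhamel map
\[
\Phi(u)(t,\cdot) := K(t,\cdot)\ast u_0 - \int_0^t K(t-\tau,\cdot)\ast\big(u\,\partial_{x_1}u\big)(\tau,\cdot)\,d\tau,
\]
on a closed ball of a complete metric space built from a time-weighted norm. Concretely, I would take $E_{T_0}$ to be modelled on a norm combining the base level and a smoothed level,
\[
\|u\|_{E_T}:=\sup_{0<t\le T}\|u(t,\cdot)\|_{H^s}+\sup_{0<t\le T}t^{\theta}\,\|u(t,\cdot)\|_{H^{\rho}},\qquad \rho=s+4\theta,
\]
with the auxiliary exponent $\theta>0$ (equivalently the smoothed regularity $\rho$) fixed during the nonlinear estimate. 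The weight $t^{\theta}$ is dictated by Lemma \ref{Lem-Ker-1}: since $K(t)$ gains $4\theta$ derivatives at the price $t^{-\theta}$, the linear contribution $K(t)\ast u_0$ lies in $E_T$ with $\|K(\cdot)\ast u_0\|_{E_T}\le Ce^{\eta T}\|u_0\|_{H^s}$ directly from Lemma \ref{Lem-Ker-2}(1); the same lemma, part (2), will supply the time-continuity needed to place $\Phi(u)$ in $\mathcal{C}([0,T_0],H^s)$.

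The heart of the argument is the bilinear estimate for the Duhamel term. Writing $u\,\partial_{x_1}u=\tfrac12\partial_{x_1}(u^2)$, I would bound $\|\Phi(u)-K\ast u_0\|_{E_T}$ by combining three ingredients: (i) a Sobolev product law in $\R^2$ to control $u\,\partial_{x_1}u$ in a negative-order space $H^{\mu}$ by $\|u(\tau)\|_{H^{\rho}}^2$, distributing the derivative and the regularity between the two factors as efficiently as possible; (ii) the smoothing of Lemmas \ref{Lem-Ker-1} and \ref{Lem-Ker-2}(1) to let $K(t-\tau)$ recover the derivative loss, paying a factor $(t-\tau)^{-(\rho-\mu)/4}$; and (iii) the resulting Beta-type time integral
\[
\int_0^t (t-\tau)^{-\frac{\rho-\mu}{4}}\,\tau^{-2\theta}\,d\tau,
\]
which converges provided both exponents are $<1$ and then, after multiplication by the outer weight $t^{\theta}$, produces a clean power $T^{(s+2)/4}$. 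Thus one expects
\[
\big\|\Phi(u)-K\ast u_0\big\|_{E_T}\le C\,e^{\eta T}\,T^{\frac{s+2}{4}}\,\|u\|_{E_T}^2,
\]
together with the analogous Lipschitz bound for $\Phi(u)-\Phi(v)$ obtained from $u\partial_{x_1}u-v\partial_{x_1}v=\tfrac12\partial_{x_1}\big((u-v)(u+v)\big)$.

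With such an estimate the scheme closes in the usual way: on the ball of radius $2Ce^{\eta}\|u_0\|_{H^s}$ one chooses $T_0$ so that $C e^{\eta T_0}T_0^{(s+2)/4}\|u_0\|_{H^s}\le \tfrac18$, which, after solving for $T_0$ and truncating at $1$, yields exactly the expression $T_0=\min\big(1,(8C\|u_0\|_{H^s})^{-4/(s+2)}\big)$ of the statement; $\Phi$ is then a contraction, and the Banach fixed point theorem produces the unique $u\in E_{T_0}$ solving \eqref{integralKS}. The two regimes in the statement reflect where the exponent driving $T_0$ is read off: for $-2<s\le0$ the base level is itself subcritical and the power is $(s+2)/4$; for $s>0$ one fixes an auxiliary $-2<s_1\le0$, runs the same construction at the rougher scale $s_1$ (legitimate since $\|u_0\|_{H^{s_1}}\le\|u_0\|_{H^s}$), and then propagates the higher regularity, so that the existence time is governed by $(s_1+2)/4$.

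I expect the genuine obstacle to be ingredients (i)--(iii) \emph{at the sharp threshold}. The exponent $s=-2$ is precisely the scaling-critical index of $\partial_t u+u\partial_{x_1}u+\Delta^2u=0$ in two dimensions, since the scaling $u\mapsto\lambda^{3}u(\lambda^4 t,\lambda x)$ leaves $\dot H^{-2}$ invariant, so the product estimate must be essentially optimal. A naive symmetric product bound pushes the smoothed regularity toward the borderline $\rho\approx1$, where the $\R^2$ algebra property just fails and where the integrability requirement $2\theta<1$ becomes binding near $s=-2$; closing the full range therefore demands a careful, possibly para-differential or Besov-refined, choice of the indices $\mu,\rho,\theta$ so that the product law, the kernel gain $(\rho-\mu)/4<1$, and the integrability $2\theta<1$ hold simultaneously while the net power remains $(s+2)/4>0$. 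Verifying that this balancing is feasible for \emph{every} $s>-2$ is the step I would spend the most care on; the remaining pieces — the linear bound, the contraction, and the time-continuity via Lemma \ref{Lem-Ker-2}(2) — are routine once it is in place.
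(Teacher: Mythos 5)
Your proposal follows essentially the same route as the paper: the mild formulation, a time-weighted two-level norm with auxiliary exponent $\theta=|s|/4$ (equivalently $\rho=0$, i.e.\ the smoothed level is $L^2$), the kernel bounds of Lemmas \ref{Lem-Ker-1} and \ref{Lem-Ker-2}, a Beta-type integral producing the factor $T^{\frac{s+2}{4}}$, and a Banach fixed point yielding exactly the stated $T_0$; for $s>0$ the paper likewise introduces an auxiliary $-2<s_1\le 0$, though it contracts directly in a three-component norm at the level $H^s$ rather than solving at $H^{s_1}$ and then propagating regularity. The sharp-threshold balancing you flag as the delicate step closes elementarily with the choice $\rho=0$: one bounds $\|\widehat u\ast\widehat u\|_{L^\infty}\le \|u(\tau,\cdot)\|_{L^2}^2$ by Young and pairs it with $\bigl\| |\xi|^{s+1}\widehat K(t-\tau,\cdot)\bigr\|_{L^2}\lesssim (t-\tau)^{-\frac{s+2}{4}}$, and the resulting exponents $\frac{s+2}{4}<1$ and $\frac{|s|}{2}<1$ hold for every $-2<s\le 0$, so no paradifferential or Besov refinement is needed.
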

\begin{proof}
For technical reasons, we will  treat the cases $-2 < s \leq 0$ and $s > 0$ separately.

\medskip

\emph{The case $-2<s\leq 0$}. For a time $0<T \leq 1$, we define the Banach space 
\begin{equation}\label{Functional-Space-1}
    E_T:= \{ u \in \mathcal{C}\big([0,T], H^s(\R^2)\big): \, \, \| u \|_{E_T}<+\infty\},
\end{equation}
equipped with the norm
\begin{equation*}
    \| u \|_{E_{T}}:= \sup_{0\leq t \leq T}\| u(t,\cdot)\|_{H^s}+t^{\frac{|s|}{4}}\| u(t,\cdot)\|_{L^2},
\end{equation*}
where the second term in this expression is essential for controlling the nonlinear term in equation (\ref{integralKS}). 

\medskip

As usual, first we study the linear part in equation (\ref{integralKS}). 
\begin{Lemme}\label{Lem-Linear-1} Given $u_0 \in H^s(\R^s)$, with $-2<s\leq 0$, it follows that $K(t,\cdot)\ast u_0\in E_T$, and the following  estimate holds: 
\begin{equation*}
    \| K(t,\cdot)\ast u_0\|_{E_T} \leq C\, \| u_0\|_{H^s},
\end{equation*}
where the constant $C>0$ depends on $R,\kappa,\alpha$ and $s$. 
\end{Lemme}
\begin{proof} 
  Since $u_0 \in H^s(\R^s)$, by the second point of Lemma \ref{Lem-Ker-2} (with $s_1=0$) it directly follows that $K(t,\cdot)\ast u_0 \in \mathcal{C}(]0,T], H^s(\R^2))$. Additionally,  by   dominated convergence we find that   $\ds{\lim_{t\to 0^{+}}}\| K(t,\cdot)\ast u_0 - u_0 \|_{H^s}=0$. Thus, we obtain that  $K(t,\cdot)\ast u_0 \in \mathcal{C}([0,T], H^s(\R^2))$.  
  
 \medskip

To control the  first term in the norm $\| \cdot \|_{E_T}$, by estimate (\ref{Estim-Ker-1}) (with $\lambda=0$) and the fact  that $T\leq 1$, we obtain
   \begin{equation}\label{Estim-Datum-1}
     \sup_{0\leq t \leq T} \| K(t,\cdot)\ast u_0 \|_{H^s}\leq C\| u_0 \|_{H^s}. 
  \end{equation}

To control the second term in the norm $\| \cdot \|_{E_T}$, first note that,  for any $0<t\leq T \leq 1$ and $\xi \in \R^2$,  we can write\[
t^{\frac{|s|}{4}} (1+|\xi|^2)^{\frac{|s|}{2}} 
= \left(t^{\frac{1}{2}} + t^{\frac{1}{2}}|\xi|^2\right)^{\frac{|s|}{2}} 
\leq  \left(1 + |t^{\frac{1}{4}}\xi|^2\right)^{\frac{|s|}{2}},
\]
hence, since $s<0$ it follows that 
\begin{equation}\label{Estim-Tech-Pointwise}
 t^{\frac{|s|}{4}} \leq \frac{\left(1 + |t^{\frac{1}{4}}\xi|^2\right)^{\frac{|s|}{2}}}{(1+|\xi|^2)^{\frac{|s|}{2}}}= \left(1 + |t^{\frac{1}{4}}\xi|^2\right)^{\frac{|s|}{2}}(1+|\xi|^2)^{\frac{s}{2}}.   
\end{equation}

With this estimate,  using the inequality (\ref{Estim-Ker-1}) (with the values $\lambda=0$ and $\lambda=|s|$), for $0<t\leq T \leq 1$ we obtain
\begin{equation}\label{Estim-Tech-Ker-1}
\begin{split}
    t^{\frac{|s|}{4}}\left\| K(t,\cdot)\ast u_0 \right\|_{L^2} =& \,    \left\| t^{\frac{|s|}{4}} \widehat{K}(t,\cdot) \widehat{u_0} \right\|_{L^2}\leq \left\| \left(1 + |t^{\frac{1}{4}}\xi|^2\right)^{\frac{|s|}{2}}(1+|\xi|^2)^{\frac{s}{2}}\, \widehat{K}(t,\cdot) \widehat{u_0}\right\|_{L^2}\\
    \leq &\, C\,  \left\|\left(1 + |t^{\frac{1}{4}}\xi|\right)^{|s|}\widehat{K}(t,\cdot) \right\|_{L^\infty}\, \| u_0\|_{H^s}\leq C e^{\eta t}\|u_0\|_{H^s} \leq C \| u_0 \|_{H^s},
    \end{split}
\end{equation}
hence, we can write
\begin{equation}\label{Estim-Datum-2}
    \sup_{0<t\leq T} t^{\frac{|s|}{4}} \| K(t,\cdot)\ast u_0 \|_{L^2} \leq C\, \| u_0\|_{H^s}. 
\end{equation}
The wished inequality follows from  (\ref{Estim-Datum-1}) and (\ref{Estim-Datum-2}). Lemma \ref{Lem-Linear-1} is proven. 
\end{proof}

Now, we study the nonlinear part in equation (\ref{integralKS}).
\begin{Lemme}\label{Lem-Nonlinear-1} The following estimate holds:
\begin{equation*}
    \left\| \int_{0}^{t} K(t-\tau,\cdot)\ast u \partial_{x_1} u (\tau,\cdot)\, d \tau \right\|_{E_T} \leq C\, T^{\frac{s+2}{4}}\| u \|^2_{E_T},
\end{equation*}
with a constant $C>0$ depending on $R,\kappa,\alpha$ and $s$.
\end{Lemme}
\begin{Remarque} The required condition $s+2>0$ implies the constraint $s>-2$.
\end{Remarque}
\begin{proof}
    As before, we will control each term in the norm $\| \cdot \|_{E_T}$ separately. For the first term, since $s\leq 0$ it follows that $(1+|\xi|^2)^{\frac{s}{2}}\leq |\xi|^s$. Then,  we can  write
    \begin{equation}\label{Estim-Tech1}
    \begin{split}
       &\,   \left\| \int_{0}^{t} K(t-\tau,\cdot)\ast  \frac{1}{2} \partial_{x_1}(u^2) (\tau,\cdot)\, d \tau \right\|_{H^s} \leq  \int_{0}^{t} \left\| (1+|\xi|^2)^{\frac{s}{2}}|\xi_1|\, \widehat{K}(t-\tau,\cdot)\, \widehat{u}\ast \widehat{u}(\tau,\cdot)  \right\|_{L^2}  d \tau \\
         \leq &\, \int_{0}^{t} \left\| \,|\xi|^{s+1}\, \widehat{K}(t-\tau,\cdot)\, \widehat{u}\ast \widehat{u}(\tau,\cdot)  \right\|_{L^2}  d \tau\leq \int_{0}^{t}\left\|\,|\xi|^{s+1}\, \widehat{K}(t-\tau,\cdot)  \right\|_{L^2}\, \| \widehat{u}\ast \widehat{u}(\tau,\cdot) \|_{L^\infty}\, d\tau. 
    \end{split}
    \end{equation}

In the last expression, in order to estimate the term $\left\|\,|\xi|^{s+1}\, \widehat{K}(t-\tau,\cdot)  \right\|_{L^2}$, using  the inequalities given in (\ref{Estim-Kernel-Pointwise}) together with the fact that $0<t\leq T\leq 1$, we obtain
\begin{equation}\label{Estim-Kernel-L2}
\begin{split}
&\, \left\|\,|\xi|^{s+1}\, \widehat{K}(t-\tau,\cdot)  \right\|_{L^2} \leq  \left\|\,|\xi|^{s+1}\, \widehat{K}(t-\tau,\cdot)  \right\|_{L^2(|\xi|\leq M)}+\left\|\,|\xi|^{s+1}\, \widehat{K}(t-\tau,\cdot)  \right\|_{L^2(|\xi|> M)}  \\
\leq &\, C e^{t-\tau}+ C \left\|\,|\xi|^{s+1}\, e^{-\eta |\xi|^4(t-\tau)}  \right\|_{L^2(|\xi|> M)}\leq C e^{t-\tau} + \frac{C}{(t-\tau)^{\frac{s+2}{4}}}\leq C\frac{e^{t-\tau}+1}{(t-\tau)^{\frac{s+2}{4}}}\leq \frac{C}{(t-\tau)^{\frac{s+2}{4}}} .
\end{split}
\end{equation}
Now, to estimate the term $\| \widehat{u}\ast \widehat{u}(\tau,\cdot) \|_{L^\infty}$,  using  Young inequalities (with $1+1/\infty=1/2+1/2$) and the second expression in the norm $\|\cdot\|_{E_T}$, we write
\begin{equation}\label{Estim-u-E_T}
 \| \widehat{u}\ast \widehat{u}(\tau,\cdot) \|_{L^\infty} \leq C\, \| u(\tau,\cdot)\|_{L^2} \leq C\, \tau^{-\frac{|s|}{2}}\, \| u\|^2_{E_T}.   
\end{equation}

Gathering these estimates, for $-2<s\leq 0$  we find that
\begin{equation}\label{Estim-Nonlin-Hs}
 \left\| \int_{0}^{t} K(t-\tau,\cdot)\ast  \frac{1}{2} \partial_{x_1}(u^2) (\tau,\cdot)\, d \tau \right\|_{H^s} \leq C \left( \int_{0}^{t} (t-\tau)^{-\frac{s+2}{4}} \tau^{-\frac{|s|}{2}}\, d\tau \right)\, \| u \|^2_{E_T} \leq C\, T^{\frac{s+2}{4}}\, \| u \|^2_{E_T}. 
\end{equation}

The second term in the norm $\|\cdot\|_{E_T}$ follows from similar computations. Here, in estimate (\ref{Estim-Tech1}), we write $|\xi|$ instead of $|\xi|^{s+1}$. Therefore, omitting the details, we also have
\begin{equation*}
   \sup_{0<t\leq T} t^{\frac{|s|}{4}}\, \left\| \int_{0}^{t} K(t-\tau,\cdot)\ast  \frac{1}{2} \partial_{x_1}(u^2) (\tau,\cdot)\, d \tau  \right\|_{L^2}\leq C\, T^{\frac{s+2}{4}}\, \| u \|^2_{E_T}.
\end{equation*}
Lemma \ref{Lem-Nonlinear-1} is now proven. 
\end{proof}

Using the estimates established in Lemmas \ref{Lem-Linear-1} and \ref{Lem-Nonlinear-1}, for the case when $-2 < s \leq 0$, we set the time $T = T_0$, where
\begin{equation}\label{Time-1} 
T_0 := \min\left(1, \frac{1}{(8C\|u_0\|_{H^s})^{\frac{4}{s+2}}} \right). \end{equation}
Consequently, the existence and uniqueness of a local-in-time solution $u \in E_{T_0} \subset \mathcal{C}([0, T_0], H^s(\mathbb{R}^2))$ follow from standard arguments, provided that condition \eqref{Time-1} holds. Moreover, the smoothness of the flow map
$S: H^s(\mathbb{R}^2) \to E_{T_0}$ also follows from well-known arguments; see, for instance, \cite{Kenig-Ponce-Vega}.

\medskip

\emph{The case $s>0$}.  The key idea in proving local well-posedness in this case is to utilize the estimates derived above (when $-2<s\leq 0$). To do this, we choose $s_1$ such that $-2< s_1 \leq 0$. Then, for $s > 0$ and for  $0 \leq T \leq 1$, we define the (slightly modified)  Banach space
\begin{equation}\label{Functional-Space-2}
  E_{T}:=\{  u \in \mathcal{C}([0,T], H^s(\R^2)) : \Vert u \Vert_{E_T}<+\infty \},   
\end{equation}
with the norm
\begin{equation*}
\Vert u \Vert_{E_T}:=  \sup_{0\leq t \leq T} \Vert u(t,\cdot)\Vert_{H^s}+
\sup_{0 < t \leq T} t^{\frac{\vert s_1 \vert}{4}} \Vert u(t,\cdot) \Vert_{L^2}+  \sup_{0< t \leq T} t^{\frac{\vert s_1 \vert}{4}} \Vert  u(t,\cdot) \Vert_{H^{s-s_1}}. 
\end{equation*}
The third term in this norm is introduced to effectively control the nonlinear expression   of the equation \eqref{integralKS} under the $H^s$-norm.

\medskip

As before, we begin by studying the linear part in equation (\ref{integralKS}).
\begin{Lemme}\label{Lem-Linear-2}  Given $u_0 \in H^s(\R^n)$, with $s>0$, it follows that $K(t,\cdot)\ast u_0 \in E_T$, and the following estimate holds
\begin{equation*}
    \| K(t,\cdot)\ast u_0 \|_{E_T} \leq C\, \|u_0\|_{H^s},
\end{equation*}
with a constant $C>0$ depending on $R,\kappa,\alpha,s_1$ and $s$.
\end{Lemme}
\begin{proof} Following the same ideas in the proof of Lemma \ref{Lem-Linear-1}, we have that $K(t,\cdot)\ast u_0 \in \mathcal{C}([0,T],H^s(\R^2))$. Additionally, proceeding as in estimates (\ref{Estim-Datum-1}) and (\ref{Estim-Datum-2}), and noting that $s_1\leq 0$, it also holds that
\[ \sup_{0\leq t \leq T} \| K(t,\cdot)\ast u_0 \|_{H^s}+\sup_{0<t\leq T} t^{\frac{|s_1|}{4}}\| K(t,\cdot)\ast u_0 \|_{L^2}\leq C\, \| u_0 \|_{H^s}. \]
Finally, using  the  estimates (\ref{Estim-Tech-Pointwise}) and (\ref{Estim-Tech-Ker-1}) (with $s_1\leq 0$), we write 
\begin{equation*}
\begin{split}
 &\,  \sup_{0<t\leq T}  t^{\frac{|s_1|}{4}}\|  K(t,\cdot)\ast u_0\|_{H^{s-s_1}} \leq    \sup_{0<t\leq T} \left\| (1+|t^{\frac{1}{4}}\xi|^2)^{\frac{|s_1|}{4}} (1+|\xi|^2)^{\frac{s_1}{4}}(1+|\xi|^2)^{\frac{s-s_1}{2}}\widehat{K}(t,\cdot)\widehat{u_0}  \right\|_{L^2} \\
 \leq &\, \sup_{0<t\leq T} \left\| (1+|t^{\frac{1}{4}}\xi|^2)^{\frac{|s_1|}{4}} \widehat{K}(t,\cdot)\widehat{u_0}  \right\|_{L^\infty}\, \| u_0\|_{H^s} \leq C\, \|u_0\|_{H^s}.
    \end{split}
\end{equation*}
Lemme \ref{Lem-Linear-2} is proven.
\end{proof}

Now,  we study the nonlinear part in equation (\ref{integralKS}).
\begin{Lemme}\label{Lem-Nonlinear-2} The following estimate holds:
\begin{equation*}
    \left\| \int_{0}^{t} K(t-\tau,\cdot)\ast u \partial_{x_1} u (\tau,\cdot)\, d \tau \right\|_{E_T} \leq C\, T^{\frac{s_1+2}{4}}\| u \|^2_{E_T}, \qquad -2<s_1\leq 0 <s, 
\end{equation*}
where the constant $C>0$ depends on $R, \kappa, \alpha, s_1$ and $s$.
\end{Lemme}
\begin{proof}
For the first term in the norm $\|\cdot\|_{E_T}$,  we write
\begin{equation*}
\begin{split}
    \left\| \int_{0}^{t} K(t-\tau,\cdot)\ast \frac{1}{2}\partial_{x_1}(u^2)(\tau,\cdot)\, d \tau \right\|_{H^s} = \frac{1}{2}\left\| \int_{0}^{t} \widehat{K}(t-\tau,\cdot)\, i \xi_1\, (1+|\xi|^2)^{\frac{s}{2}} (\widehat{u}\ast \widehat{u})(\tau,\cdot)\, d \tau \right\|_{L^2}.
    \end{split}
\end{equation*}
Then, we use the pointwise estimate
\begin{equation*}
    \begin{split}
(1+\vert \xi \vert^2)^{\frac{s}{2}} \vert (\widehat{u} \ast \widehat{u}) (\xi) \vert \lesssim &\, (1+\vert \xi \vert^2)^{\frac{s_1}{2}} \ \left( \left( (1+\vert \xi \vert^2)^{\frac{s-s_1}{2}} \vert  \widehat{u} \vert \right) \ast  \vert \widehat{u} \vert \right) (\xi)  \\
 &\,+ (1+\vert \xi \vert^2)^{\frac{s_1}{2}} \left( \vert \widehat{u} \vert \ast  \left( (1+\vert \xi \vert^2)^{\frac{s-s_1}{2}}  \vert \widehat{u} \vert\right)\right) (\xi), \end{split}
\end{equation*}
together with the Young inequalities (with $1+1/\infty=1/2+1/2$), to obtain
\begin{equation*}
\begin{split}
&\, \frac{1}{2}\left\| \int_{0}^{t} \widehat{K}(t-\tau,\cdot)\, i \xi_1\, (1+|\xi|^2)^{\frac{s}{2}} (\widehat{u}\ast \widehat{u})(\tau,\cdot)\, d \tau \right\|_{L^2}\\
\leq &\, C \int_{0}^{t}\left\| \, |\xi|^{s_1+1}\widehat{K}(t-\tau,\cdot)\right\|_{L^2}\,\left\|  \left( \big( (1+\vert \xi \vert^2)^{\frac{s-s_1}{2}} \vert  \widehat{u} \vert \big) \ast  \vert \widehat{u} \vert + \vert \widehat{u} \vert \ast  \big( (1+\vert \xi \vert^2)^{\frac{s-s_1}{2}}  \vert \widehat{u} \vert\big)\right)(\tau,\cdot) \right\|_{L^\infty}\,d\tau\\
\leq &\, C \int_{0}^{t}\left\| \, |\xi|^{s_1+1}\widehat{K}(t-\tau,\cdot)\right\|_{L^2}\, \| u(\tau,\cdot)\|_{H^{s-s_1}}\, \| u(\tau,\cdot)\|_{L^2}\, d\tau.
 \end{split}
\end{equation*}
Here, by estimate (\ref{Estim-Kernel-L2}), we have  that $\left\| \, |\xi|^{s_1+1}\widehat{K}(t-\tau,\cdot)\right\|_{L^2} \leq C\, (t-\tau)^{-\frac{s_1+2}{4}}$. Additionally, by the second and the third term in the norm $\|\cdot\|_{E_T}$, we have that  $\| u(\tau,\cdot)\|_{H^{s-s_1}}\, \| u(\tau,\cdot)\|_{L^2} \leq C\, \tau^{-\frac{|s_1|}{2}}\| u \|^2_{E_T}$. Thus, we obtain 
\begin{equation*}
\begin{split}
    &\, C \int_{0}^{t}\left\| \, |\xi|^{s_1+1}\widehat{K}(t-\tau,\cdot)\right\|_{L^2}\, \| u(\tau,\cdot)\|_{L^2}\, \| u(\tau,\cdot)\|_{H^{s-s_1}}\, d\tau \\
    \leq &\, C \left( \int_{0}^{t}(t-\tau)^{-\frac{s_1+2}{4}}\tau^{-\frac{|s_1|}{2}}d\tau \right)\| u\|^2_{E_T}\leq C t^{\frac{s_1+2}{4}}\| u \|^2_{E_T}, 
 \end{split}
\end{equation*}
hence, 
\begin{equation*}
    \sup_{0\leq t \leq T}  \left\| \int_{0}^{t} K(t-\tau,\cdot)\ast \frac{1}{2}\partial_{x_1}(u^2)(\tau,\cdot)\, d \tau \right\|_{H^s} \leq C \, T^{\frac{s_1+2}{4}}\| u \|^2_{E_T}.
\end{equation*}

The second term in the norm $\| \cdot\|_{E_T}$ follows from similar computations, and it holds that 
\begin{equation*}
    \sup_{0\leq t \leq T} t^{\frac{|s_1|}{4}} \left\| \int_{0}^{t} K(t-\tau,\cdot)\ast \frac{1}{2}\partial_{x_1}(u^2)(\tau,\cdot)\, d \tau \right\|_{L^2} \leq C \, T^{\frac{s_1+2}{4}}\| u \|^2_{E_T}.
\end{equation*}

The third term in the norm is similarly estimated, where we use the pointwise inequality
\begin{equation*}
(1+\vert \xi \vert^2)^{\frac{s-s_1}{2}} \vert (\widehat{u} \ast \widehat{u}) (\xi) \vert \lesssim  \ \left( \left( (1+\vert \xi \vert^2)^{\frac{s-s_1}{2}} \vert  \widehat{u} \vert \right) \ast  \vert \widehat{u} \vert \right) (\xi) +  \left( \vert \widehat{u} \vert \ast  \left( (1+\vert \xi \vert^2)^{\frac{s-s_1}{2}}  \vert \widehat{u} \vert\right)\right) (\xi),
\end{equation*}
to obtain that 
\begin{equation*}
    \sup_{0\leq t \leq T} t^{\frac{|s_1|}{4}} \left\| \int_{0}^{t} K(t-\tau,\cdot)\ast \frac{1}{2}\partial_{x_1}(u^2)(\tau,\cdot)\, d \tau \right\|_{H^{s-s_1}} \leq C \, T^{\frac{s_1+2}{4}}\| u \|^2_{E_T}.
\end{equation*}
Lemma \ref{Lem-Nonlinear-2} is proven. 
\end{proof}

As the previous case, from the estimates obtained in Lemmas \ref{Lem-Linear-2} and \ref{Lem-Nonlinear-2}, we set the time $T=T_0$,
where 
\begin{equation}\label{Time-2} 
T_0 := \min\left(1, \frac{1}{(8C\|u_0\|_{H^s})^{\frac{4}{s_1+2}}} \right),\end{equation}
to obtain the existence and uniqueness of a local-in-time solution $u \in E_{T_0} \subset \mathcal{C}([0, T_0], H^s(\mathbb{R}^2))$ to equation (\ref{integralKS}). This completes the proof of Proposition \ref{Prop-LWP}.  \end{proof}
 
{\bf Regularity of solutions.}  Here, we exploit the smoothing effects of the kernel $K(t,x)$ to improve the regularity of solutions to equation (\ref{EcuacionKS}). We use the standard notation  $\ds{H^\infty(\R^2)=\bigcap_{\sigma\geq s}H^\sigma(\R^2)}$. 
\begin{Proposition}\label{Prop-Regularity} Under the same hypothesis of Proposition \ref{Prop-LWP}, it holds that $u \in \mathcal{C}^{1}\big(]0,T_0], H^{\infty}(\R^2)\big)$.
\end{Proposition}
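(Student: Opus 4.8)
The plan is to establish the instantaneous smoothing (\ref{Regularity}) by a bootstrap on the spatial regularity, fueled by the parabolic gain recorded in Lemma \ref{Lem-Ker-2}, and then to upgrade to $\mathcal{C}^1$-regularity in time via the equation itself. The first ingredient I would record is the semigroup identity $\widehat{K}(t_1+t_2,\xi)=\widehat{K}(t_1,\xi)\,\widehat{K}(t_2,\xi)$, which is immediate from (\ref{kernel}) and gives $K(t_1+t_2,\cdot)=K(t_1,\cdot)\ast K(t_2,\cdot)$. Feeding the mild formulation (\ref{integralKS}) evaluated at a time $\varepsilon$ back into itself then yields the \emph{restarted Duhamel identity}, valid for $0<\varepsilon<t\leq T_0$:
\[
u(t,\cdot)=K(t-\varepsilon,\cdot)\ast u(\varepsilon,\cdot)-\int_{\varepsilon}^{t}K(t-\tau,\cdot)\ast \tfrac{1}{2}\partial_{x_1}(u^2)(\tau,\cdot)\,d\tau.
\]
Restarting from a positive time $\varepsilon$ is crucial: the datum $u(\varepsilon,\cdot)$ already carries the $E_{T_0}$-bounds, and the time-weight singularities at $\tau=0$ built into the $\|\cdot\|_{E_{T_0}}$-norm are harmless on $[\varepsilon,T_0]$, where one has the uniform bound $\sup_{[\varepsilon,T_0]}\|u(\tau,\cdot)\|_{L^2}\leq C_\varepsilon$ (valid both for $-2<s\leq 0$, using the weight $\tau^{-|s|/4}$, and for $s>0$, via $\|u(\tau,\cdot)\|_{L^2}\leq\|u(\tau,\cdot)\|_{H^s}$).

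In the low-regularity regime I would control the nonlinearity exactly as in Lemma \ref{Lem-Nonlinear-1}: writing the product in Fourier as $\widehat{u}\ast\widehat{u}$ and using $\|\widehat{u}\ast\widehat{u}\|_{L^\infty}\leq\|u(\tau,\cdot)\|_{L^2}^2\leq C_\varepsilon$, together with the $L^2$-kernel bound of the type (\ref{Estim-Kernel-L2}) in the form $\||\xi|^{\sigma+1}\widehat{K}(t-\tau,\cdot)\|_{L^2}\leq C(t-\tau)^{-\frac{\sigma+2}{4}}e^{\eta(t-\tau)}$. The restarted Duhamel integral is then finite in $H^\sigma$ whenever $\int_\varepsilon^t(t-\tau)^{-\frac{\sigma+2}{4}}\,d\tau<\infty$, i.e. $\frac{\sigma+2}{4}<1$, that is $\sigma<2$; meanwhile the linear term $K(t-\varepsilon,\cdot)\ast u(\varepsilon,\cdot)$ lands in every $H^\sigma$ by Lemma \ref{Lem-Ker-2}. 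Since $\varepsilon>0$ is arbitrary, this already gives $u(t,\cdot)\in H^\sigma$ for all $t>0$ and every $\sigma<2$, in particular for some $\sigma>1$.

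In the high-regularity regime, once $u(t,\cdot)\in H^\sigma$ with $\sigma>1$, the space $H^\sigma(\R^2)$ is a Banach algebra, so $\|\tfrac{1}{2}\partial_{x_1}(u^2)(\tau,\cdot)\|_{H^{\sigma-1}}\leq C\|u(\tau,\cdot)\|_{H^\sigma}^2$. Applying the first point of Lemma \ref{Lem-Ker-2} with a gain of $1+\delta$ derivatives produces the integrable singularity $(t-\tau)^{-\frac{1+\delta}{4}}$ for every $\delta<3$, so each step raises the regularity by almost three derivatives; iterating over $\varepsilon$ and $\sigma$ yields $u(t,\cdot)\in H^\infty(\R^2)$ for every $t\in\,]0,T_0]$. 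Continuity $u\in\mathcal{C}(]0,T_0],H^\sigma)$ for each $\sigma$ follows from the second point of Lemma \ref{Lem-Ker-2} applied on $[\varepsilon,T_0]$ together with continuity of the Duhamel integral. For the time regularity, I would differentiate the restarted identity: with $u(\varepsilon,\cdot)\in H^\infty$ and the now smooth, time-continuous forcing $(u\partial_{x_1}u)(\cdot)\in\mathcal{C}(]0,T_0],H^\infty)$, the linear term is differentiable with $\partial_t\big(K(t-\varepsilon,\cdot)\ast u(\varepsilon,\cdot)\big)=-A\,K(t-\varepsilon,\cdot)\ast u(\varepsilon,\cdot)$, where $A$ has symbol $-(R-\kappa)\xi_1^2+\kappa\xi_2^2-\alpha|\xi|^3+|\xi|^4$, and the Duhamel term is handled by the standard parabolic computation. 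Equivalently and more economically, since $u(t,\cdot)\in H^\infty$ one reads off from (\ref{EcuacionKS}) that $\partial_t u=-u\,\partial_{x_1}u-(R-\kappa)\partial_{x_1}^2 u+\kappa\,\partial_{x_2}^2 u+\alpha(-\Delta)^{3/2}u-\Delta^2 u$ lies in $\mathcal{C}(]0,T_0],H^\sigma)$ for every $\sigma$, whence $u\in\mathcal{C}^1(]0,T_0],H^\infty)$, exactly as in \cite[Proposition 4.2]{CorJar1}.

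The step I expect to be the main obstacle is crossing the $H^1$ threshold in the first regime: there the algebra structure is not yet available, so one must rely solely on the crude bound $\|\widehat{u}\ast\widehat{u}\|_{L^\infty}\leq\|u\|_{L^2}^2$ and check carefully that a single restarted step genuinely reaches some $\sigma>1$, i.e. that the constraint $\frac{\sigma+2}{4}<1$ leaves room above $1$, while keeping every constant uniform on $[\varepsilon,T_0]$ and the intermediate continuity-in-time intact so that the bootstrap may legitimately be iterated.
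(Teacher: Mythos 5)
Your proposal is correct, and it takes a genuinely different route from the paper. The paper never restarts the Duhamel formula: it keeps the representation $u=K(t,\cdot)\ast u_0+B(u,u)$ from $t=0$ and shows directly that $B(u,u)\in\mathcal{C}(]0,T_0],H^{s+\lambda})$ for a gain $\lambda$ that is limited by the interaction of the kernel singularity $(t-\tau)^{-\frac{s+\lambda+2}{4}}$ with the weight $\tau^{-|s|/2}$ at $\tau=0$ (hence $\lambda<s+2$ for $-2<s\leq 0$, $\lambda<2-s$ for $0<s<2$, and $\lambda<3$ once $H^s$ is an algebra), then iterates. Your semigroup identity and restarted formula from $\varepsilon>0$ decouple the two singularities: the linear term $K(t-\varepsilon,\cdot)\ast u(\varepsilon,\cdot)$ lands in $H^\infty$ instantly, the weight at $\tau=0$ disappears from the integral over $[\varepsilon,t]$, and you reach every $\sigma<2$ in a single step before switching to the algebra regime with gain almost $3$ — arguably a cleaner bookkeeping, at the price of having to track uniformity of constants on $[\varepsilon,T_0]$ and re-nest the $\varepsilon$'s between stages, which you correctly flag. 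The one place where you are noticeably briefer than the paper is the continuity in time of the Duhamel term: the paper devotes Lemmas \ref{Lem-Reg-Tech-1} and \ref{Lem-Reg-Tech-2} to it, including a dominated-convergence argument for the increment $J(t_1,t_2)$ in the low-regularity cases, because in its scheme continuity must be verified at every intermediate regularity level. In your scheme the bootstrap only needs boundedness of $\|u(\tau)\|_{H^\sigma}$ on $[\varepsilon,T_0]$ at each stage, and continuity can be checked once at the end in the high-regularity setting, where it reduces to the paper's easy case (3) via Lemma \ref{Lem-Ker-2}; it would still be worth writing that verification out, but it is not a gap. Your two alternatives for the $\mathcal{C}^1$ upgrade include the paper's own (reading $\partial_t u$ off the equation), and the minor cosmetic issue of using $|\xi|^{\sigma+1}$ rather than $(1+|\xi|^2)^{\frac{\sigma+1}{2}}$ when $\sigma>0$ is handled exactly as in the paper's subcase $s+\lambda>0$.
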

\begin{proof}  First, we will prove that each term on the right-hand side of the integral equation (\ref{integralKS}) belongs to the space $\mathcal{C}\big(]0,T_0], H^{\infty}(\R^2)\big)$. For the linear term, we directly obtain from Lemma \ref{Lem-Ker-2} that $K(t,\cdot)\ast u_0 \in \mathcal{C}\big(]0,T_0], H^{\infty}(\R^2)\big)$. Therefore, we will focus on the (more delicate) nonlinear term.

\medskip

For a parameter $\lambda>0$, which we will fix below, we begin by proving that the map
\[t \mapsto  \frac{1}{2}\int_{0}^{t}K(t-\tau,\cdot)\ast \partial_{x_1}(u^2)(\tau,\cdot)d\,\tau:=B(u,u)(t,\cdot), \]
belongs to the space $\mathcal{C}\big(]0,T_0], H^{s+\lambda}(\R^2)\big)$. To this end, we fix $0<\varepsilon\ll T_0$ sufficiently small, and  for $0<\varepsilon<t_1<t_2\leq T_0$, we write
\begin{equation*}
\begin{split}
&\left\| B(u,u)(t_2,\cdot)-B(u,u)(t_1,\cdot) \right\|_{H^{s+\lambda}}\\
=&\, \frac{1}{2} \left\|\int_{0}^{t_2}K(t_2-\tau,\cdot)\ast \partial_{x_1}(u^2)(\tau,\cdot)d\,\tau - \int_{0}^{t_1}K(t_2-\tau,\cdot)\ast \partial_{x_1}(u^2)(\tau,\cdot)d\,\tau\right. \\
&\,\quad +\left.\int_{0}^{t_1}K(t_2-\tau,\cdot)\ast \partial_{x_1}(u^2)(\tau,\cdot)d\,\tau- \int_{0}^{t_1}K(t_1-\tau,\cdot)\ast \partial_{x_1}(u^2)(\tau,\cdot)d\,\tau \right\|_{H^{s+\lambda}}.
\end{split}
\end{equation*}
Denoting 
\[ I(t_1,t_2):= \int_{t_1}^{t_2}\| K(t_2-\tau,\cdot)\ast \partial_{x_1}(u^2)(\tau,\cdot)\|_{H^{s+\lambda}}\, d\tau,\]
\[ J(t_1,t_2):= \int_{0}^{t_1}\left\| \big(K(t_2-\tau,)-K(t_1-\tau, \cdot)\big)\ast \partial_{x_1}(u^2)(\tau,\cdot)\right\|_{H^{s+\lambda}}\, d\tau,\]
we obtain the estimate
\begin{equation}\label{Estim-Reg-Nonlinear}
 \left\| B(u,u)(t_2,\cdot)-B(u,u)(t_1,\cdot)\right\|_{H^{s+\lambda}} \leq I(t_1,t_2)+J(t_1,t_2).   
\end{equation}
In the following lemmas, for fixed $t_1$, we will prove that 
\[ \lim_{t_2\to t_1} I(t_1,t_2)=0, \quad \text{and} \quad   \lim_{t_2\to t_1} J(t_1,t_2)=0. \]
Additionally, for technical reasons, we will treat the cases $-2<s\leq 0$, $0<s<2$ and $2\leq s$ separately.

\begin{Lemme}\label{Lem-Reg-Tech-1} There exists a constant $C>0$, depending on the parameters $R,\kappa,\alpha,\lambda, s$ and $T_0$, such that the following statements hold:
\begin{enumerate} 
    \item  For fixed $-2<s\leq 0$, set the parameter $\lambda$ such that  $0<\lambda<s+2$. Then,  it follows that:
\begin{equation}\label{Estim-I}
 I(t_1,t_2) \leq C \,(t_2 - t_1)^{\frac{s-\lambda+2}{4}}\, \| u \|^2_{E_{T_0}},   \qquad  s-\lambda+2>0.
\end{equation}

 \medskip
    
\item  For fixed $0<s<2$, set the parameter $\lambda$ such that  $0<\lambda<2-s$. Then, the same estimate (\ref{Estim-I}) holds. 

\medskip

\item For fixed $2\leq s$, set the parameter $\lambda$ such that $0<\lambda<3$. Then, the following estimate holds: 
\begin{equation}\label{Estim-I-2}
 I(t_1,t_2) \leq C \,(t_2 - t_1)^{\frac{3-\lambda}{4}}\, \| u \|^2_{E_{T_0}},   \qquad  3-\lambda>0.
\end{equation}
\end{enumerate}
\end{Lemme}
\begin{proof} 
\begin{enumerate} \item  \emph{The case $-2<s\leq 0$}.  First, we we consider the subcase  $s+\lambda \leq 0$. Following the same estimates as in  (\ref{Estim-Nonlin-Hs}),  we find that
\begin{equation}\label{Estim-Tech-I}
 I(t_1,t_2) \leq  C \left( \int_{t_1}^{t_2} (t_2 - \tau)^{-\frac{s+\lambda+2}{4}}\, \tau^{-\frac{|s|}{2}}\, d\tau \right)\| u\|^2_{E_{T_0}}.   
\end{equation}
To compute this integral,  note that  since $s+\lambda\leq 0$ we have $-\frac{s+\lambda+2}{4}>-1$. Similarly, since $|s|<2$ we obtain $\frac{-|s|}{2}>-1$. Furthermore, since $\lambda<s+2$, it follows that $-\frac{s+\lambda+2}{4}-\frac{|s|}{2}+1= \frac{s-\lambda+2}{4}>0$. Therefore, using the change of variable $\tau=t_1+(t_2-t_1)z$, we can write
\begin{equation*}
 \int_{t_1}^{t_2} (t_2 - \tau)^{-\frac{s+\lambda+2}{4}}\, \tau^{-\frac{|s|}{2}}\, d\tau =  (t_2-t_1)^{\frac{s-\lambda+2}{4}}   \int_{0}^{1} |1 - z|^{-\frac{s+\lambda+2}{4}}\, |z|^{-\frac{|s|}{2}}\, dz =C  (t_2 - t_1)^{\frac{s-\lambda+2}{4}}. 
\end{equation*}
Thus, we obtain the wished estimate (\ref{Estim-I}).
\medskip

Now, we consider the subcase $s+\lambda>0$. We proceed by writing:
\begin{equation*}
\begin{split}
      I(t_1,t_2) \leq &\, \int_{t_1}^{t_2} \| K(t_2-\tau,\cdot)\ast u^2(\tau,\cdot)\|_{H^{s+\lambda+1}}\, d\tau \\
      \leq &  \int_{t_1}^{t_2} \left\| (1+|\xi|^2)^{\frac{s+\lambda+1}{2}}\widehat{K}(t_2-\tau,\cdot)\right\|_{L^2}\, \| \widehat{u}\ast \widehat{u}(\tau,\cdot)\|_{L^\infty}\,d\tau.
\end{split}
\end{equation*}
To estimate the term $\left\| (1+|\xi|^2)^{\frac{s+\lambda+1}{2}}\widehat{K}(t_2-\tau,\cdot)\right\|_{L^2}$, using (\ref{Estim-Kernel-Pointwise}) we can write
\begin{equation*}
    \begin{split}
&  \left\| (1+|\xi|^2)^{\frac{s+\lambda+1}{2}}\widehat{K}(t_2-\tau,\cdot)\right\|_{L^2(|\xi|\leq M)} + \left\| (1+|\xi|^2)^{\frac{s+\lambda+1}{2}}\widehat{K}(t_2-\tau,\cdot)\right\|_{L^2(|\xi|> M)}  \\
\leq &\, Ce^{\eta (t_2-\tau)}+ C\, \left\| \, |\xi|^{s+\lambda+1} \, e^{-\eta |\xi|^4(t_2-\tau)} \right\|_{L^2(|\xi|> M)}\\
 \leq &\, Ce^{\eta (t_2-\tau)} + \frac{C}{(t_2-\tau)^{\frac{s+\lambda+2}{4}}} \leq C\, \frac{e^{\eta(t_2-\tau)}+1}{(t_2-\tau)^{\frac{s+\lambda+2}{4}}}\leq \frac{C}{(t_2-\tau)^{\frac{s+\lambda+2}{4}}}.
    \end{split}
\end{equation*}
The second term $\| \widehat{u}\ast \widehat{u}(\tau,\cdot)\|_{L^\infty}$ was already estimated in (\ref{Estim-u-E_T}).

\medskip

Gathering these estimates, we recover the same expression as in (\ref{Estim-Tech-I}). In this subcase, since $\lambda<s+2$ and $s\leq 0$, it follows that  $s+\lambda<2s+2\leq 2$, and therefore $-\frac{s+\lambda+2}{4}>-1$.  Thus, by applying the same reasoning as in the computations above, the desired estimate \eqref{Estim-I} follows.

\medskip 

\item \emph{The case $0<s<2$}.  Following the same computations as above, we again obtain the same estimate as in \eqref{Estim-Tech-I}. Here,  additional assumption $0<\lambda<2-s$ implies that $0<\lambda<s+2$, and then  $-\frac{s+\lambda+2}{4}>-1$. Therefore, the desired estimate \eqref{Estim-I} follows.

\medskip

\item  \emph{The case $2\leq s$}. Using the first point of Lemma \ref{Lem-Ker-2} (with $s_1=\lambda+1$) and the fact that $H^s(\R^2)$ is a Banach algebra, we obtain
\begin{equation*}
\begin{split}
I(t_1, t_2) \leq &\, C \int_{t_1}^{t_2} (t_2 - \tau)^{-\frac{\lambda + 1}{4}} \| u(\tau, \cdot) \|^2_{H^s} \, d\tau
\leq C \left( \int_{t_1}^{t_2} (t_2 - \tau)^{-\frac{\lambda + 1}{4}} \, d\tau \right) \| u \|^2_{E_{T_0}} \\
\leq &\, C (t_2 - t_1)^{\frac{3 - \lambda}{4}} \| u \|^2_{E_{T_0}}.
\end{split}
\end{equation*}
Thus, the desired estimate \eqref{Estim-I-2} is obtained. 
\end{enumerate}

This completes the proof of Lemma \ref{Lem-Reg-Tech-1}.
\end{proof}

By Lemma \ref{Lem-Reg-Tech-1}, we have that $\ds{\lim_{t_2 \to t_1} I(t_1, t_2)=0}$. Now, we will prove that this also holds for the term $J(t_1, t_2)$.
\begin{Lemme}\label{Lem-Reg-Tech-2} Assume the same hypotheses of Lemma \ref{Lem-Reg-Tech-1}, given in points $(1), (2)$, and $(3)$. Then, in all these cases, we have:
\begin{equation}\label{Limit-J}
    \lim_{t_2\to t_1} J(t_1,t_2)=0.
\end{equation}
\end{Lemme}
\begin{proof}    
\begin{enumerate}
    \item \emph{The case $-2<s\leq 0$.} As before,  we first consider the subcase  $s+\lambda\leq 0$. Then, we write
    \begin{equation*}
        J(t_1,t_2)\leq \int_{0}^{t_1} \left\| \, |\xi|^{s+\lambda+1}\big( \widehat{K}(t_2-\tau,\cdot)-\widehat{K}(t_1-\tau,\cdot)\big) \right\|_{L^2}\, \| \widehat{u}\ast \widehat{u}(\tau,\cdot)\|_{L^\infty}\, d\tau.
    \end{equation*}
We will use the Lebesgue Dominated Convergence Theorem to show that the expression on the right-hand side converges to zero as $t_2 \to t_1$.

\medskip

For fixed $t_1$  and  $0<\tau<t_1$, we begin by proving that
\begin{equation}\label{Limit-Tech-1}
\lim_{t_2 \to t_1} \left\| \, |\xi|^{s+\lambda+1}\left( \widehat{K}(t_2-\tau,\cdot)-\widehat{K}(t_1-\tau,\cdot) \right) \right\|_{L^2} = 0.
\end{equation}
Indeed, using expression (\ref{kernel}), we can write
\begin{equation}\label{Iden-Tech-1}
\widehat{K}(t_2-\tau,\cdot)-\widehat{K}(t_1-\tau,\cdot)
= \left( \widehat{K}(t_2-t_1,\cdot)-1 \right)\widehat{K}(t_1-\tau,\cdot).
\end{equation}
Therefore, we obtain
\begin{equation*}
\left\| \, |\xi|^{s+\lambda+1}\left( \widehat{K}(t_2-\tau,\cdot)-\widehat{K}(t_1-\tau,\cdot) \right) \right\|_{L^2}
= \left\| \left( \widehat{K}(t_2 - t_1,\cdot) - 1 \right) |\xi|^{s+\lambda+1} \widehat{K}(t_1 - \tau,\cdot) \right\|_{L^2}.
\end{equation*}

In the last expression, using (\ref{kernel}) again, note that for any fixed $\xi \in \R^2$ and $0<\tau<t_1$, we have the pointwise limit
\begin{equation*}
\lim_{t_2 \to t_1} \left( \widehat{K}(t_2 - t_1,\cdot) - 1 \right) |\xi|^{s+\lambda+1} \widehat{K}(t_1 - \tau,\cdot) = 0.
\end{equation*}

On the other hand, there exists a function $g(\tau,t_1,\cdot)\in L^2(\R^2)$ such that we have the following bound uniformly with respect to $t_2$:
\begin{equation*}
\left| \left( \widehat{K}(t_2 - t_1,\cdot) - 1 \right) |\xi|^{s+\lambda+1} \widehat{K}(t_1 - \tau,\cdot) \right| \leq C\, g(\tau, t_1, \xi).
\end{equation*}

In fact, from (\ref{Estim-Kernel-Pointwise}) we obtain
\begin{equation}\label{Estim-Tech-1}
\left| \widehat{K}(t_2 - t_1,\cdot) - 1 \right| \leq C.
\end{equation}
Therefore, again by (\ref{Estim-Kernel-Pointwise}), we can write
\begin{equation*}
    \begin{split}
      &\,  \left| \big(\widehat{K}(t_2-t_1)-1\big)\,|\xi|^{s+\lambda+1}\,\widehat{K}(t_1-\tau,\cdot) \right|  \leq C\, \left| \,|\xi|^{s+\lambda+1}\,\widehat{K}(t_1-\tau,\cdot)\right|\\
      \leq &\, C |\xi|^{s+\lambda+1}\, \mathds{1}_{\{|\xi|\leq M\}}(\xi)+ C\, |\xi|^{s+\lambda+1}e^{-\eta |\xi|^4 (t_1-\tau)}\, \mathds{1}_{\{|\xi|>M\}}(\xi):=g(\tau,t_1,\xi).
    \end{split}
\end{equation*}

Here, since  $-2<s+\lambda$ (given that $-2<s$ and $0<\lambda$), it follows that $g(\tau,t_1,\cdot)\in L^2(\R^2)$. Consequently, the desired limit (\ref{Limit-Tech-1}) follows from the Lebesgue Dominated Convergence Theorem.

\medskip

Once the pointwise limit in (\ref{Limit-Tech-1}) has been established, we must verify the existence of a function $h(t_1,\cdot)\in L^1([0,t_1])$  such that the following estimate holds uniformly with respect to $t_2$:
\begin{equation}\label{Control-Tech-1}
\left\| \, |\xi|^{s + \lambda + 1} \left( \widehat{K}(t_2 - \tau, \cdot) - \widehat{K}(t_1 - \tau, \cdot) \right) \right\|_{L^2} \, \left\| \widehat{u} \ast \widehat{u}(\tau, \cdot) \right\|_{L^\infty} \leq C \, h(t_1, \tau).
\end{equation}

In fact, using the identity (\ref{Iden-Tech-1}) and the inequality (\ref{Estim-Tech-1}), we obtain
\begin{equation*}
\begin{split}
&\, \left\| \, |\xi|^{s + \lambda + 1} \left( \widehat{K}(t_2 - \tau, \cdot) - \widehat{K}(t_1 - \tau, \cdot) \right) \right\|_{L^2} \, \| \widehat{u} \ast \widehat{u}(\tau, \cdot) \|_{L^\infty} \\
\leq &\, C \, \left\| \, |\xi|^{s + \lambda + 1} \widehat{K}(t_1 - \tau, \cdot) \right\|_{L^2} \, \| \widehat{u} \ast \widehat{u}(\tau, \cdot) \|_{L^\infty}.
\end{split}
\end{equation*}

Here, the term $\left\|\, |\xi|^{s+\lambda+1} \widehat{K}(t_1-\tau,\cdot)\right\|_{L^2}$ was estimated in (\ref{Estim-Kernel-L2}), while the term $\| \widehat{u}\ast \widehat{u}(\tau,\cdot)\|_{L^\infty}$ was estimated in (\ref{Estim-u-E_T}). Thus, we have
\begin{equation*}
\left\| \, |\xi|^{s + \lambda + 1} \left( \widehat{K}(t_2 - \tau, \cdot) - \widehat{K}(t_1 - \tau, \cdot) \right) \right\|_{L^2} \, \| \widehat{u} \ast \widehat{u}(\tau, \cdot) \|_{L^\infty} \leq C \, (t_1 - \tau)^{-\frac{s + \lambda + 2}{4}} \, \tau^{-\frac{|s|}{2}} := h(t_1, \tau).
\end{equation*}

As in the proof of point (1) in Lemma \ref{Lem-Reg-Tech-1}, since $-2<s\leq 0$, $s+\lambda\leq 0$ and $0<\lambda<s+2$, it follows that
\begin{equation}\label{Integral-Tech-1}
\int_{0}^{t_1} h(t_1, \tau) \, d\tau = \int_{0}^{t_1} (t_1 - \tau)^{-\frac{s + \lambda + 2}{4}} \, \tau^{-\frac{|s|}{2}} \, d\tau \leq C \, t_1^{\frac{s - \lambda + 2}{4}} < +\infty.
\end{equation}

Once we have established (\ref{Limit-Tech-1}) and (\ref{Control-Tech-1}), the Lebesgue Dominated Convergence Theorem implies the desired limit (\ref{Limit-J}).

\medskip

Now, we consider the subcase $s+\lambda >0$. In this case, we obtain the desired limit (\ref{Limit-J}) by following the same computations as above, with the modification that the expression $|\xi|^{s+\lambda+1}$ is replaced by $(1+|\xi|^2)^{\frac{s+\lambda+1}{2}}$. For brevity, we omit the details.

\medskip

\item \emph{The case $0<s<2$}. We follow the same arguments as above, where, as in the proof of point (2) in Lemma \ref{Lem-Reg-Tech-1}, the assumption $\lambda<2-s$ ensures that (\ref{Integral-Tech-1}) holds.

\medskip

\item  \emph{The case $2\leq s$}. In contrast to the previous cases, the Lebesgue Dominated Convergence Theorem is no longer required here. Instead, using the second point of Lemma \ref{Lem-Ker-2} (with $s_1 =\lambda+1$) and the fact that the space $H^s(\R^2)$ is a Banach algebra, we write
\begin{equation*}
    \begin{split}
        J(t_1,t_2)\leq &\,  \int_{0}^{t_1} \left\| \big( K(t_2-\tau,\cdot)-K(t_1-\tau,\cdot) \big)\ast u^2(\tau,\cdot) \right\|_{H^{s+\lambda+1}}\, d\tau \\
        \leq &\, C(t_2-t_1) \int_{0}^{t_1}\| u(\tau,\cdot)\|^2_{H^s}\, d\tau \leq  C(t_2-t_1)t_1 \, \|u\|^2_{E_{T_0}},
    \end{split}
\end{equation*}
from which we obtain the desired limit (\ref{Limit-J}).
\end{enumerate}
Lemma \ref{Lem-Reg-Tech-2} is proven. 
\end{proof}

With Lemmas \ref{Lem-Reg-Tech-1} and \ref{Lem-Reg-Tech-2} at hand, we return to the estimate (\ref{Estim-Reg-Nonlinear}) to conclude that the map $t\mapsto B(u,u)(t,\cdot)$ belongs to the space $\mathcal{C}\big(]0,T_0], H^{s+\lambda}(\R^2)\big)$, where $\lambda>0$ is suitably chosen as in Lemma \ref{Lem-Reg-Tech-1}.

\medskip

Then, since the solution $u(t,x)$ of equation (\ref{EcuacionKS}) satisfies the integral formulation given in (\ref{integralKS}):
\[  u(t,\cdot)=K(t,\cdot)\ast u_0 + B(u,u)(t,\cdot),\]
it follows that $u \in \mathcal{C}\big(]0,T_0], H^{s+\lambda}(\R^2)\big)$. Iterating this argument, we deduce that $u \in \mathcal{C}\big(]0,T_0], H^{\infty}(\R^2)\big)$. 

\medskip

Finally, from the differential equation (\ref{EcuacionKS}), we write
\[ \partial_t u =   -u\, \partial_{x_1}u - (R - \kappa) \partial^2_{x_1} u +\kappa\,\partial^2_{x_2} u + \alpha (- \Delta)^{3/2}u -  \Delta^2 u, \]
where each term on the right-hand side belongs to the space $\mathcal{C}\big(]0,T_0], H^{\infty}(\R^2)\big)$. Thus, we conclude that $u\in \mathcal{C}^{1}\big(]0,T_0], H^{\infty}(\R^2)\big)$, completing the proof of Proposition \ref{Prop-Regularity}.
\end{proof}

\medskip

{\bf Global well-posedness.} Using the regularity properties of solutions to equation (\ref{EcuacionKS}), we prove their global-in-time existence.

\begin{Proposition}\label{Prop-GWP}
Under the same hypotheses as in Proposition \ref{Prop-LWP}, the solution $u \in E_{T_0}$ of equation (\ref{EcuacionKS}) extends to the time interval $[0, +\infty[$.
\end{Proposition}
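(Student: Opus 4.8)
The plan is to establish global existence via an a priori bound that controls the growth of a suitable norm, combined with the local theory of Proposition \ref{Prop-LWP} and the smoothing of Proposition \ref{Prop-Regularity}. Since Proposition \ref{Prop-Regularity} tells us that for any initial datum in $H^s(\R^2)$ the solution instantaneously belongs to $H^\infty(\R^2)$, I would first reduce the problem to proving a global bound for smooth solutions. More precisely, I would fix a small time $t_{*}\in\,]0,T_0[$, observe that $u(t_{*},\cdot)\in H^\infty(\R^2)$, and restart the evolution from $t_{*}$ working in a high-regularity space (say $H^2(\R^2)$ or larger), so that all the manipulations in the energy estimate below are fully justified.

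The core of the argument is an $L^2$ energy estimate. Multiplying equation (\ref{EcuacionKS}) by $u$ and integrating over $\R^2$, the nonlinear term vanishes by the divergence structure,
\begin{equation*}
\int_{\R^2} u\,u\,\partial_{x_1}u\,dx = \frac{1}{3}\int_{\R^2}\partial_{x_1}(u^3)\,dx = 0,
\end{equation*}
so that, after integrating the linear terms by parts and passing to Fourier variables via Plancherel, one obtains
\begin{equation*}
\frac{1}{2}\frac{d}{dt}\|u(t,\cdot)\|^2_{L^2} = -\int_{\R^2}\Big(-(R-\kappa)\xi_1^2+\kappa\,\xi_2^2-\alpha|\xi|^3+|\xi|^4\Big)|\widehat{u}(t,\xi)|^2\,d\xi.
\end{equation*}
The dissipative symbol $|\xi|^4$ dominates at high frequencies, exactly as exploited in the pointwise bound (\ref{Estim-Pointwise}): the destabilizing contributions $(R-\kappa)\xi_1^2$ and $\alpha|\xi|^3$ are controlled by $\tfrac12|\xi|^4$ plus a constant multiple of $|\widehat u|^2$ on the compact frequency region $|\xi|\le M$, while $\kappa\,\xi_2^2\ge 0$ only helps. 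This yields a differential inequality of the form
\begin{equation*}
\frac{d}{dt}\|u(t,\cdot)\|^2_{L^2} \leq 2\eta\,\|u(t,\cdot)\|^2_{L^2},
\end{equation*}
with $\eta>0$ the same constant appearing in Lemma \ref{Lem-Ker-1}. By Gr\"onwall's inequality, $\|u(t,\cdot)\|_{L^2}\leq e^{\eta t}\|u(t_{*},\cdot)\|_{L^2}$, so the $L^2$-norm cannot blow up in finite time.

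Finally I would run the standard continuation argument. The local existence time $T_0(\|u_0\|_{H^s})$ in Proposition \ref{Prop-LWP} depends only on the $H^s$-norm of the datum, and for $s\le 0$ the $L^2$ bound above, together with the smoothing estimate of Lemma \ref{Lem-Ker-2}, controls the $H^s$-norm on any compact time interval; for $s>0$ one bootstraps the $L^2$ bound through the smoothing effect (\ref{Regularity}) to control the higher Sobolev norm on $[\,t_{*},T\,]$ for arbitrary $T$. Since the solution remains in the relevant space with a norm that stays finite up to any prescribed time, the local solution can be repeatedly extended, and no finite-time singularity can occur; hence the solution exists on $[0,+\infty[$. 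I expect the main obstacle to be the passage from the clean $L^2$ a priori bound to a global control of the $H^s$-norm for the rough data $s\in\,]-2,0]$, where $u(t_{*},\cdot)$ must be handled through the smoothing estimates rather than a direct energy argument at the $H^s$ level; making the bootstrap from $L^2$ regularity to the $E_T$-norm uniform in time is the delicate point, whereas the energy identity and the Gr\"onwall step are routine.
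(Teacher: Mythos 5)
Your proposal is correct and follows essentially the same route as the paper: invoke Proposition \ref{Prop-Regularity} to reduce to a smooth solution on $]0,T_0]$, derive the $L^2$ energy identity via Plancherel, control the symbol by splitting into $|\xi|\le M$ and $|\xi|>M$ using (\ref{Estim-Pointwise}), and close with Gr\"onwall and a continuation argument. The paper is in fact terser than you are about the final bootstrap from the $L^2$ bound to control of the $H^s$-norm for $s>0$, which you rightly flag as the point needing the kernel smoothing estimates.
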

\begin{proof}  By Proposition \ref{Prop-Regularity}, we know that the (local-in-time) solution $u\in E_{T_0}$ of equation (\ref{EcuacionKS}) gains regularity and satisfies
$u \in \mathcal{C}^1\big(]0,T_0], H^\infty(\R^2)\big)$. Consequently, it is sufficient to prove an  \emph{a priori} estimate on the norm $\| u(t,\cdot)\|_{L^2}$.

\medskip

Using again the fact that $u \in \mathcal{C}^1\big(]0,T_0], H^\infty(\R^2)\big)$, we conclude that $u(t,x)$ solves equation (\ref{EcuacionKS}) in the classical sense. In addition, one can multiply each term in (\ref{EcuacionKS}) by $u(t,x)$ and integrate with respect to the spatial variable $x\in \R^2$. Using Parseval's identity together with well-known properties of the Fourier transform, we obtain
\begin{equation*}
        \frac{1}{2}\frac{d}{dt}\| u(t,\cdot)\|^2_{L^2}= \int_{\R^2} -\left(-(R-\kappa)\xi^2_1+\kappa\,\xi^2_2- \alpha |\xi|^3+|\xi|^4\right)|\widehat{u}(t,\xi)|^2\, d\xi. 
\end{equation*}

Then, by the estimate (\ref{Estim-Pointwise}), we can write
\begin{equation}\label{Estim-Tech-Energy}
    \begin{split}
     &\,  \int_{\R^2} -\left( -(R-\kappa)\xi^2_1+\kappa\,\xi^2_2- \alpha |\xi|^3+|\xi|^4 \right)|\widehat{u}(t,\xi)|^2\, d\xi  \\
     =&\,\int_{|\xi|\leq M} -\left(  -(R-\kappa)\xi^2_1+\kappa\,\xi^2_2- \alpha |\xi|^3+|\xi|^4 \right)|\widehat{u}(t,\xi)|^2\, d\xi \\
     &\, + \int_{|\xi|>M} -\left( -(R-\kappa)\xi^2_1+\kappa\,\xi^2_2- \alpha |\xi|^3+|\xi|^4 \right)|\widehat{u}(t,\xi)|^2\, d\xi\\
     \leq &\, C\, \int_{|\xi|\leq M} |\widehat{u}(t,\xi)|^2\, d\xi - \eta \int_{|\xi|>M}|\xi|^4\, |\widehat{u}(t,\xi)|^2\, d \xi \\
     \leq &\,  C\, \int_{|\xi|\leq M} |\widehat{u}(t,\xi)|^2\, d\xi  \leq C\| u(t,\cdot)\|^2_{L^2}. 
    \end{split}
\end{equation}

Thus, it follows that
\[ \frac{1}{2}\frac{d}{dt}\| u(t,\cdot)\|^2_{L^2} \leq  C\| u(t,\cdot)\|^2_{L^2}, \]
and for any $0<t_0\leq T_0$ and any $t\geq t_0$, Grönwall’s lemma implies the following \emph{a priori} estimate:
\[ \| u(t,\cdot)\|^2_{L^2}\leq e^{C(t-t_0)}\|u(t_0,\cdot)\|^2_{L^2}.  \]
Proposition \ref{Prop-GWP} is proven.
\end{proof}
Theorem \ref{Th:GWP} follows from Propositions \ref{Prop-LWP}, \ref{Prop-Regularity} and \ref{Prop-GWP}.

\medskip

\emph{Proof of Corollary \ref{Cor:Uniqueness}}  Let $T>0$ be a fixed time. For $s>2$,  assume that $u,v\in \mathcal{C}([0,T], H^s(\R^2))$ are two solutions of equation (\ref{EcuacionKS}), with initial data $u_0,v_0\in H^s(\R^2)$, respectively.  We define $w(t,x):=u(t,x)-v(t,x)$, which is a solution of the equation:
\begin{equation*}
\begin{cases}\vspace{2mm}
     \partial_t w + w\,\partial_{x_1} u + v\,\partial_{x_1}w +(R-\kappa)\partial^2_{x_1} w -\kappa\, \partial^2_{x_2}w -\alpha (-\Delta)^{\frac{3}{2}}w +\Delta^2 w=0, \\
     w(0,\cdot)=u_0-v_0
\end{cases} 
 \end{equation*}
Since $u(t,\cdot), v(t,\cdot), w(t,\cdot)\in H^s(\R^2)$ with $s>-2$, one can multiply each term by $w(t,\cdot)$ and integrate over $\R^2$. As before, using Parseval's identity together with well-known properties of the Fourier transform and integration by parts, we obtain
\begin{equation*}
    \frac{1}{2}\frac{d}{dt}\| w(t,\cdot)\|^2_{L^2}= \int_{\R^2} -\left( -(R-\kappa)\xi^2_1+\kappa\,\xi^2_2- \alpha |\xi|^3+|\xi|^4  \right)|\widehat{w}(t,\xi)|^2\, d\xi - \int_{\R^2} w\,\partial_{x_1} u\, w dx.
\end{equation*}

On the right-hand side, the first term was already estimated in (\ref{Estim-Tech-Energy}). For the second term, using H\"older inequalities and the Sobolev embedding
 $H^{s-1}(\R^2)\subset L^\infty(\R^2)$, we obtain 
\begin{equation*}
    \begin{split}
        \left| \int_{\R^2} w\,\partial_{x_1} u\, w dx \right| \leq &\, \| w \|^2_{L^2}\, \| \partial_{x_1}u \|_{L^\infty}\leq  \| w \|^2_{L^2}\, \| \nabla u \|_{L^\infty} \\
        \leq &\, C\, \| w\|^2_{L^2}\, \| \nabla u \|_{H^{s-1}}\leq C\, \|w\|^2_{L^2}\,\| u\|_{H^s}. 
    \end{split}
\end{equation*}

Gathering these estimates, we find that 
\begin{equation*}
    \frac{d}{dt}\| w(t,\cdot)\|^2_{L^2}\leq C \big(1+\| u(t,\cdot)\|_{H^s}\big)\| w(t,\cdot)\|^2_{L^2},
\end{equation*}
and hence, using  the Gr\"onwall's lemma, for any $0<t\leq T$, we can write
\begin{equation*}
    \| w(t,\cdot)\|^2_{L^2}\leq \| u_0 - v_0 \|^2_{L^2}\, e^{C\, T+\int_{0}^{T}\| u(\tau,\cdot)\|_{H^s}\, d \tau}.
\end{equation*}

Note that, since $u \in \mathcal{C}([0,T], H^s(\R^2))$, the integral $\int_{0}^{T}\| u(\tau,\cdot)\|_{H^s}\, d \tau$ converges. Thus, we have the identity $u(t,\cdot)=v(t,\cdot)$ over $[0,T]$, provided that $u_0=v_0$. This completes the proof of Corollary \ref{Cor:Uniqueness}.

\section{Ill-posedness}\label{Sec:Ill-Posedness}
\emph{Proof of Theorem \ref{Th:Sharp-LWP}.} We begin by explaining the general idea of the proof. Arguing by contradiction,  we  assume that the equation (\ref{EcuacionKS}) is well-posed in $H^s(\R^2)$, with $s<-2$, over an interval of time $[0,T]$ for some fixed $T>0$, and that the data-to-solution flow $S$, defined in (\ref{Map-flow}), is a $\mathcal{C}^2$-function at origin $u_0=0$.

\medskip

This latter assumption implies that, for any fixed $0<t<T$, the second Fréchet derivative of $S(t)$ at $u_0 = 0$, denoted by $D^2_0 S(t)$, defines a bilinear operator
\begin{equation*}
D^2_0 S(t): H^s(\R^2)\times H^s(\R^2) \to H^s(\R^2), \qquad (v_0, w_0)\mapsto D^2_0 S(t)(v_0, w_0),
\end{equation*}
which is bounded in the strong topology of the spaces $H^s(\R^2) \times H^s(\R^2)$ and $H^s(\R^2)$.

\medskip

Next, for $s < -2$, one can construct specific initial data $v_0, w_0 \in H^s(\R^2)$ that contradict the boundedness of the operator $D^2_0 S(t)$.

\medskip

{\bf The operator  $D^2_0 S(t)$}.  In the identities below, we explicitly compute the operator $D^2_0 S(t)$. Recall that, from the right-hand side of the integral formulation (\ref{integralKS}), for any $u_0 \in H^s(\R^2)$, we have
\begin{equation*}
S(t)u_0 = u(t,\cdot)= K(t,\cdot) \ast u_0 + B(u(t,\cdot),u(t,\cdot)),
\end{equation*}
where
\begin{equation}\label{Bilinear}
B(u(t,\cdot),u(t,\cdot)) := - \frac{1}{2}\int_{0}^{t} K(t-\tau,\cdot) \ast \partial_{x_1}( u^2)  (\tau,\cdot) \, d\tau.
\end{equation}

We begin by computing the first Fréchet derivative of $S(t)$ at an arbitrary point $v_0\in H^s(\R^s)$, and in the arbitrary direction $w_0 \in H^s(\Rt)$:  
\begin{equation*}
D^1_{v_0}S(t)(w_0) = \lim_{h \to 0} \frac{S(t)(v_0 + h\, w_0) - S(t)(v_0)}{h},
\end{equation*}
where the limit is understood in the strong topology of the space $H^s(\R^2)$. Then, since $B(u,u)(t,\cdot)$ is a bilinear and symmetric form,  we write
\begin{equation}\label{First-Frechet-Der}
\begin{split}
&\, D^1_{v_0}S(t)w_0\\
= &\, \lim_{h \to 0} \frac{K(t,\cdot) \ast (v_0 + h\, w_0) - K(t,\cdot) \ast v_0}{h} \\
&\, + \lim_{h \to 0} \frac{B\big(S(t)(v_0 + h \,w_0),\,  S(t)(v_0 + h\, w_0)\big) - B\big(S(t)v_0,\,  S(t)v_0\big)}{h} \\
= &\, K(t,\cdot) \ast w_0 \\
&\, + \lim_{h \to 0} \frac{B\big(S(t)(v_0 + h\, w_0), \, S(t)(v_0 + h\, w_0)\big) - B\big(S(t)(v_0 + h\, w_0), \, S(t)v_0\big)}{h}\\
&\, + \lim_{h \to 0} \frac{B\big(S(t)(v_0 + h\, w_0),\,  S(t)v_0\big) - B\big(S(t)v_0, \,S(t)v_0\big)}{h}\\
=&\, K(t,\cdot) \ast w_0 +  \lim_{h \to 0}B\left(S(t)(v_0+h\, w_0), \,  \frac{S(t)(v_0 + h\, w_0)-S(t)v_0}{h} \right)\\
&\, + \lim_{h \to 0} B\left(\frac{S(t)(v_0 + h \,w_0)-S(t)v_0}{h}, \, S(t)v_0\right)\\
=&\, K(t,\cdot) \ast w_0  + 2B\Big( S(t)v_0, \, D^1_{v_0}S(t)w_0 \Big).
\end{split}
\end{equation}

Hence, setting $v_0 = 0$ and noting that $S(t)0 = 0$, it follows that
\[ D^1_0 S(t)w_0= K(t,\cdot)\ast w_0. \]

Now we compute the second Fréchet derivative $D^2_0 S(t)$. To simplify the computation, for any $v_0, w_0 \in H^s(\R^2)$ we define the map
\[  z \in \R \mapsto  D^1_{z v_0}S(t)w_0\in H^s(\R^2). \]

Then, using the identity (\ref{First-Frechet-Der})  and following the same steps as above, we have
\begin{equation*}
\begin{split}
  \partial_z D^1_{z v_0}S(t)w_0 = &\, \lim_{h \to 0} \frac{D^1_{(z+h)v_0}S(t)w_0- D^1_{zv_0}S(t)w_0}{h}\\
  =&\, 2 B\Big(D^1_{z v_0}S(t)v_0,\, D^1_{z v_0}S(t)w_0\Big) + 2 B\Big(S(t)(z v_0), \, D^2_{z v_0}S(t)(v_0, w_0)\Big).  
\end{split}
\end{equation*}
Setting $z = 0$, and using the facts that $S(t)0 = 0$ and $D^1_0 S(t)w_0 = K(t,\cdot) \ast w_0$, we obtain
\begin{equation}\label{Operator-D2}
\begin{split}
D^2_0 S(t)(v_0, w_0)
= &\, \left. \partial_z D^1_{z v_0}S(t)w_0 \right|_{z=0} \\
= &\, 2 B\Big(K(t,\cdot) \ast v_0, \, K(t,\cdot) \ast w_0\Big) + 2 B\Big(0, \, D^2{z v_0}S(t)(v_0, w_0)\Big) \\
= &\, 2 B\Big(K(t,\cdot) \ast v_0, \,  K(t,\cdot) \ast w_0\Big).
\end{split}
\end{equation}

{\bf Specific initial data}.   To simplify our writing, from now on,  for any positive quantities $A$ and $B$, we will use the notation $A \simeq B$ and  $A \lesssim B$ to indicate that there exist positive constants, independent of $A$ and $B$, such that $C_1 A \leq B \leq C_2 A$ and $A \leq  C\, B $, respectively. Additionally, we will write $A\gg 1$ to denote that $A$ is sufficiently large.

\medskip

In this context, we define the initial data $v_0, w_0 \in H^s(\R^2)$ as follows. First, we choose $N \in \mathbb{N}$ such that $N \gg 1$, and $r \in \R$ such that $r \simeq 1$. Then, we consider the regions in $\R^2$: 
\begin{equation}\label{Regions}
 A_1 := [-N, -N+r]\times [-N, -N+r], \qquad \text{and} \qquad A_2:= [N+r, N+2r]\times [N+r, N+2r].   
\end{equation}

Next, for $s<-2$,  we define 
\begin{equation}\label{Def-Initial-Data}
    v_0(x):= r^{-1} N^{-s} \, \mathcal{F}^{-1}\big( \mathds{1}_{A_1}(\xi) \big), \qquad \text{and} \qquad w_0(x):= r^{-1} N^{-s} \, \mathcal{F}^{-1}\big( \mathds{1}_{A_2}(\xi) \big),
\end{equation}
where $\mathcal{F}^{-1}(\cdot)$ denotes in inverse Fourier transform. 

\medskip

We have that $v_0, w_0 \in H^s(\R^2)$ and $\| v_0 \|_{H^s}\simeq \| w_0 \|_{H^s}\simeq 1$. Indeed, for $v_0$, we compute
\begin{equation*}
    \| v_0 \|^{2}_{H^s}= r^{-2}\, N^{2s}\,\int_{\R^2} (1+|\xi|^2)^{s}\,\mathds{1}_{A_1}(\xi)\, d\xi.
\end{equation*}
Since $\xi \in A_1$, where $N\gg 1$ and $r\simeq 1$, it follows that $|\xi|^2 \simeq  N^2$, and  hence $(1+|\xi|^2)^s \simeq N^{2s}$. Therefore, we can write
\[ r^{-2}\, N^{2s}\,\int_{\R^2} (1+|\xi|^2)^{s}\,\mathds{1}_{A_1}(\xi)\, d\xi \simeq r^{-2}\, N^{2s}\, N^{-2s}\, \int_{A_1} d\xi\simeq 1. \]
The same argument applies to the function $w_0$. 

\medskip

{\bf Unboundedness of the operator $D^2_0S(t)$.} Given the specific initial data  $v_0, w_0 \in H^s(\R^2)$ defined in (\ref{Def-Initial-Data}), with $N\gg 1$,  and the operator $D^2_0S(t)$ computed in  (\ref{Operator-D2}), we aim to prove the following estimate:
 \begin{equation}\label{Estim-Contradiction}
    \| D^2_0 S(t)(v_0, w_0) \|_{H^s} \simeq e^{-t} N^{-2s-4}.
\end{equation}

As previously discussed, our assumptions imply that the operator  $D^2_0S(t)$ is a bounded operator between the strong topologies of the spaces $H^s(\R^2)\times H^s(\R^2)$ and $H^s(\R^2)$. Using the fact that $\| v_0 \|_{H^s}\simeq \| w_0 \|_{H^s}\simeq 1 $, for any $0<t<T$,  from (\ref{Estim-Contradiction}) we obtain 
\begin{equation*}
 e^{-T}\,N^{-2s-4} \lesssim    \| D^2_0 S(t)(v_0, w_0) \|_{H^s}  \lesssim \| v_0 \|_{H^s}\, \| w_0 \|_{H^s}\simeq 1, 
\end{equation*}
and therefore, 
\[ e^{-T}N^{-2s-4} \lesssim 1. \]
When $s<-2$, it follows that $-2s-4>0$. Consequently, one can choose the parameter $N$ sufficiently large such that $N\gg (e^T)^{\frac{1}{-2s-4}}$, leading  to $e^{-T}N^{-2s-4} \gg 1$,  which yields a contradiction.

\medskip

To verify the estimate (\ref{Estim-Contradiction}), using (\ref{Operator-D2}) and (\ref{Bilinear}),  we write
\begin{equation}\label{Identity-Bilinear}
\begin{split}
    \| D^2_0 S(t)(v_0, w_0)\|_{H^s}= &\,  \left\| (1+|\xi|^2)^{\frac{s}{2}}\, \mathcal{F}\Big( D^2_0 S(t)(v_0, w_0) \Big) \right\|_{L^2}\\
    =&\, \frac{1}{2}\left\|  (1+|\xi|^2)^{\frac{s}{2}}\, \int_{0}^{t} \widehat{K}(t-\tau,\cdot)\, i \, \xi_1 \Big( \widehat{K}(\tau,\cdot)\widehat{v_0}\ast  \widehat{K}(\tau,\cdot)\widehat{w_0} \Big)\, d \tau\right\|_{L^2}\\
    =&\, \frac{1}{2}\left\|  (1+|\xi|^2)^{\frac{s}{2}}\, \xi_1 \,  \int_{0}^{t} \widehat{K}(t-\tau,\cdot)\,  \Big( \widehat{K}(\tau,\cdot)\widehat{v_0}\ast  \widehat{K}(\tau,\cdot)\widehat{w_0} \Big)\, d \tau\right\|_{L^2}.
    \end{split}
\end{equation}

Here, for any fixed $\xi \in \R^2$ and for the parameter $r \simeq 1$ introduced above,  one can compute
\begin{equation}\label{Estimate}
 \int_{0}^{t} \widehat{K}(t-\tau,\xi)\,  \Big( \widehat{K}(\tau,\cdot)\widehat{v_0}\ast  \widehat{K}(\tau,\cdot)\widehat{w_0} \Big)(\xi)\, d \tau \simeq \begin{cases}\vspace{2mm}
  - e^{-t}\,N^{-2s-4}, \quad \xi \in [r, 3r]\times [r, 3r], \\
  0, \quad \text{otherwise}. 
 \end{cases}
    \end{equation}
Indeed,  recall that the expression $\widehat{K}(t,\xi)$ is given in (\ref{kernel}), where, for simplicity, we define
\begin{equation}\label{Def-f}
 f(\xi):= -(R-\kappa)\xi^2_1+\kappa\,\xi^2_2- \alpha |\xi|^3+|\xi|^4.   
\end{equation}
Therefore, we write
\begin{equation*}
    \begin{split}
       &\, \int_{0}^{t} \widehat{K}(t-\tau,\xi)\, \Big( \widehat{K}(\tau,\cdot)\widehat{v_0}\ast  \widehat{K}(\tau,\cdot)\widehat{w_0} \Big)(\xi)\, d \tau\\
       =&\, \int_{0}^{t} e^{-f(\xi)(t-\tau)}\,\Big( e^{-f(\xi)\tau}\widehat{v_0}\ast e^{-f(\xi)\tau}\widehat{w_0}(\tau,\cdot)\Big)(\xi) \, d \tau\\
       =&\, \int_{0}^{t} e^{-f(\xi)(t-\tau)}\, \left( \int_{\R^2}  e^{-f(\xi-\eta)\tau}\widehat{v_0}(\xi-\eta)\,  e^{-f(\eta)\tau}\widehat{w_0}(\eta)\, d \eta \right) \, d \tau\\
       =&\, \int_{\R^2} \widehat{v_0}(\xi-\eta)\, \widehat{w_0}(\eta)\left( \int_{0}^{t} e^{-f(\xi)(t-\tau)}\,e^{(-f(\xi-\eta)-f(\eta))\tau}\, d \tau  \right)\, d \eta \\
       =&\, \int_{\R^2} \widehat{v_0}(\xi-\eta)\, \widehat{w_0}(\eta)\left( \frac{e^{(-f(\xi-\eta)-f(\eta))t}-e^{-f(\xi)t}}{f(\xi)-f(\xi-\eta)-f(\eta)}  \right)\, d \eta.
       \end{split}
\end{equation*}

Additionally, recall that the functions $ v_0$ and $ w_0$ were defined  in expression (\ref{Def-Initial-Data}). Specifically, for the regions $A_1$ and $A_2$ given in  (\ref{Regions}), we have that
\begin{equation*}
  \widehat{v_0}(\xi-\eta) = r^{-1}N^{-s}\, \mathds{1}_{[-N, -N+r]\times [-N, -N+r]}(\xi-\eta), \qquad \widehat{w_0}(\eta)=r^{-1}N^{-s}\, \mathds{1}_{[N+r, N+r]\times [N+2r, N+2r]}(\eta).   
\end{equation*}
Then, we can write
\begin{equation}\label{Identity-Ill-posedness}
    \begin{split}
     &\, \int_{0}^{t} \widehat{K}(t-\tau,\xi)\, \Big( \widehat{K}(\tau,\cdot)\widehat{v_0}\ast  \widehat{K}(\tau,\cdot)\widehat{w_0} \Big)(\xi)\, d \tau\\ 
     =&\, r^{-2}N^{-2s}\int_{\R^2} \mathds{1}_{[-N, -N+r]\times [-N, -N+r]}(\xi-\eta) \times  \mathds{1}_{[N+r, N+r]\times [N+2r, N+2r]}(\eta) \times \\
     &\qquad \quad \quad \quad \quad  \times \left( \frac{e^{(-f(\xi-\eta)-f(\eta))t}-e^{-f(\xi)t}}{f(\xi)-f(\xi-\eta)-f(\eta)}  \right)\, d \eta.
    \end{split}
\end{equation}

Note that 
\[ \xi - \eta \in [-N, -N+r]\times [-N, -N+r], \]
is equivalent to 
\[ \eta \in [N-r, N] \times [N-r, N] + \xi. \]
Using this fact,  we conclude that the regions $[N-r, N] \times [N-r, N] + \xi$ and $[N+r, N+2r]\times [N+2r, N+2r]$ are not disjoint when $\xi \in [r, 3r]\times [r, 3r]$. 

\medskip

Consequently, returning to identity (\ref{Identity-Ill-posedness}),  when   $\xi \notin [r, 3r]\times [r, 3r]$, we have
\begin{equation*}
    \begin{split}
   &\,  \int_{0}^{t} \widehat{K}(t-\tau,\xi)\, \Big( \widehat{K}(\tau,\cdot)\widehat{v_0}\ast  \widehat{K}(\tau,\cdot)\widehat{w_0} \Big)(\xi)\, d \tau\\
   =&\, r^{-2}N^{-2s}  \int_{\R^2} \mathds{1}_{[N-r, N] \times [N-r, N] + \xi}(\eta)\times \mathds{1}_{[N+r, N+2r]\times [N+2r, N+2r]}(\eta)\times \left( \frac{e^{(-f(\xi-\eta)-f(\eta))t}-e^{-f(\xi)t}}{f(\xi)-f(\xi-\eta)-f(\eta)}  \right)\, d \eta\\
     =&\, 0.
    \end{split}
\end{equation*}

On the other hand, when  $\xi \in [r, 3r]\times [r, 3r]$, we have $|\xi| \simeq r$.  Using this  fact, and recalling that $N\gg 1$ and $r\simeq 1$, we find that $|\xi|\simeq 1$, $|\xi-\eta|\simeq N$ and $|\eta| \simeq N$. Therefore, from the definition of $f(\xi)$ given in (\ref{Def-f}), we obtain that 
\[ f(\xi)\simeq 1, \qquad f(\xi-\eta)\simeq N^4, \qquad f(\eta)\simeq N^4, \]
and 
\[ e^{(-f(\xi-\eta)-f(\eta))t} \simeq e^{-N^4\, t}, \qquad e^{-f(\xi)t} \simeq e^{-t}.  \]
Hence, since $N\gg 1$, we can write
\[ \left( \frac{e^{(-f(\xi-\eta)-f(\eta))t}-e^{-f(\xi)t}}{f(\xi)-f(\xi-\eta)-f(\eta)}  \right) \simeq \frac{e^{-N^4\, t}- e^{-t}}{-N^4}\simeq \frac{e^{-t}}{N^4}.  \]

Substituting this estimate into identity (\ref{Identity-Ill-posedness}), we obtain
\begin{equation*}
\begin{split}
& \int_{0}^{t} \widehat{K}(t - \tau, \xi) \left( \widehat{K}(\tau, \cdot)\widehat{v_0} \ast \widehat{K}(\tau, \cdot)\widehat{w_0} \right)(\xi)\, d\tau \\
\simeq &\, e^{-t} r^{-2} N^{-2s - 4} \int_{\mathbb{R}^2} \mathds{1}_{[-N, -N + r] \times [-N, -N + r]}(\xi - \eta)\times \mathds{1}_{[N + r, N + 2r] \times [N + 2r, N + 2r]}(\eta)\, d\eta \\
= &\, e^{-t} r^{-2} N^{-2s - 4} \int_{\mathbb{R}^2} \mathds{1}_{[N - r, N] \times [N - r, N] + \xi}(\eta)\times \mathds{1}_{[N + r, N + 2r] \times [N + 2r, N + 2r]}(\eta)\, d\eta \\
\simeq &\, e^{-t} r^{-2} N^{-2s - 4} r^2 = e^{-t} N^{-2s - 4},
\end{split}
\end{equation*}
which yields the desired estimate (\ref{Estimate}). 

\medskip

With this estimate at hand, and returning to identity (\ref{Identity-Bilinear}), we have
\begin{equation*}
     \| D^2_0 S(t)(v_0, w_0)\|_{H^s} \simeq e^{-t}N^{-2s-4}\, \left\| (1+|\xi|^2)^{\frac{s}{2}}\xi_1 \right\|_{L^2}.
\end{equation*}
Since $s<-2$, it follows that 
\[\left\|(1+|\xi|^2)^{\frac{s}{2}}\xi_1\right\|_{L^2} \leq \left\|(1+|\xi|^2)^{\frac{s+1}{2}}\right\|_{L^2} <+\infty, \]
 yielding the desired estimate (\ref{Estim-Contradiction}). This concludes the proof of Theorem \ref{Th:Sharp-LWP}.

\section{Parameter asymptotics}\label{Sec:Nonlocal-to-local}
\emph{Proof of Theorem \ref{Th:Nonlocal-to-local}}.  For ${\bf a}:=(R,\kappa,\alpha)\in Q_{*}\setminus \{ (1,0,0) \}$, recall that solutions to equation (\ref{EcuacionKS}) are given by the integral formulation (\ref{integralKS}):
\begin{equation*}
    u^{\bf a}(t,\cdot)=K^{\bf a}(t,\cdot)\ast u^{\bf a}_0 - \frac{1}{2}\int_{0}^{t}K^{\bf a}(t-\tau,\cdot)\ast \partial_{x_1}(u^{\bf a})^2(\tau,\cdot)\, d\tau,
\end{equation*}
where, from expression (\ref{kernel}), we denote
\begin{equation*}
    \widehat{K^{\bf a}}(t,\xi)= e^{ -\left(-(R-\kappa)\xi^2_1 + \kappa\, \xi^2_2 -\alpha|\xi|^3 + |\xi|^4 \right) t}.
\end{equation*}
Similarly, for ${\bf o}:=(1,0,0)$, solutions to equation (\ref{Equation-alpha-zero}) are given by  
\begin{equation*}
    u^{\bf o}(t,\cdot)=K^{\bf o}(t,\cdot)\ast u^{\bf o}_0 - \frac{1}{2}\int_{0}^{t}K^{\bf o}(t-\tau,\cdot)\ast \partial_{x_1}(u^{\bf o})^2(\tau,\cdot)\, d\tau,
\end{equation*}
where
\begin{equation*}
    \widehat{K^{\bf o}}(t,\xi)= e^{ -\left( -\xi^2_1 + |\xi|^4   \right) t}.
\end{equation*}

Our main estimate (\ref{Convergence-Solutions}) relies crucially on the convergence of the kernels $\widehat{K^{\bf a}}(t,\xi)\to \widehat{K^{\bf o}}(t,\xi)$, as ${\bf a} \to {\bf o}$, which is  studied  in the following lemma.

\medskip

From now on, invoking Remark \ref{Rmk} below, note that we will use constants $C,\eta>0$ depending on the fixed parameter $\kappa_{*}$, which are independent of each physical parameter ${\bf a}=(R,\kappa,\alpha)\in Q_{*}$.

\begin{Lemme} There exists two constants $C,\eta>0$,  such that for any time $T>0$ and any ${\bf a}\in Q_{*}\setminus \{(1,0,0)\}$, the following estimates hold:
\begin{equation}\label{Conv-kernel-1}
 \sup_{0\leq t \leq T} \left\|\, \widehat{K^{\bf a}}(t,\cdot)-\widehat{K^{\bf o}}(t,\cdot)\right\|_{L^\infty} \leq C e^{\eta\, T}\, |{\bf a}-{\bf o}|,   
\end{equation}
and 
\begin{equation}\label{Conv-kernel-2}
 \sup_{0\leq t \leq T} \left\|\, |\xi| \left(  \widehat{K^{\bf a}}(t,\cdot)-\widehat{K^{\bf o}}(t,\cdot)\right)\right\|_{L^\infty} \leq C e^{\eta\, T}\, |{\bf a} - {\bf o}|.
 \end{equation}
\end{Lemme}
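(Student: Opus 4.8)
The plan is to write both kernels as $\widehat{K^{\bf a}}(t,\xi)=e^{-f_{\bf a}(\xi)t}$ and $\widehat{K^{\bf o}}(t,\xi)=e^{-f_{\bf o}(\xi)t}$, where $f_{\bf a}(\xi)=-(R-\kappa)\xi_1^2+\kappa\,\xi_2^2-\alpha|\xi|^3+|\xi|^4$ (the function introduced in (\ref{Def-f})) and $f_{\bf o}(\xi)=-\xi_1^2+|\xi|^4$, and to reduce everything to controlling the difference of the exponents. A direct computation gives
\[ f_{\bf a}(\xi)-f_{\bf o}(\xi)=-(R-1)\xi_1^2+\kappa\,|\xi|^2-\alpha|\xi|^3, \]
so that, since each component of ${\bf a}-{\bf o}=(R-1,\kappa,\alpha)$ is bounded by $|{\bf a}-{\bf o}|$, I obtain the pointwise bound $|f_{\bf a}(\xi)-f_{\bf o}(\xi)|\le C\,|{\bf a}-{\bf o}|\,(|\xi|^2+|\xi|^3)$. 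This already isolates the decisive gain $|{\bf a}-{\bf o}|$ and reduces (\ref{Conv-kernel-1})--(\ref{Conv-kernel-2}) to weighted bounds on the remaining exponential factor.

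Second, I would linearize the difference of exponentials along the segment joining the two parameter vectors. Setting $F(\theta):=(1-\theta)f_{\bf o}+\theta f_{\bf a}$ and using the fundamental theorem of calculus in $\theta$,
\[ \widehat{K^{\bf a}}(t,\xi)-\widehat{K^{\bf o}}(t,\xi)=-t\,\big(f_{\bf a}(\xi)-f_{\bf o}(\xi)\big)\int_0^1 e^{-F(\theta)\,t}\,d\theta, \]
whence $\big|\widehat{K^{\bf a}}-\widehat{K^{\bf o}}\big|\le t\,|f_{\bf a}-f_{\bf o}|\,\sup_{\theta\in[0,1]}e^{-F(\theta)t}$. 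The crucial structural observation is that $F(\theta)$ is exactly the exponent attached to the parameter vector ${\bf b}:=(1-\theta){\bf o}+\theta{\bf a}$; since the region $Q_{*}$ defined in (\ref{Region}) is a product of intervals, hence convex, and contains both ${\bf o}$ and ${\bf a}$, it follows that ${\bf b}\in Q_{*}$ for every $\theta\in[0,1]$. Therefore Lemma \ref{Lem-Ker-1} together with Remark \ref{Rmk} applies uniformly to $e^{-F(\theta)t}=\big|\widehat{K^{\bf b}}(t,\xi)\big|$, yielding the frequency-localized bounds of (\ref{Estim-Kernel-Pointwise}) with constants $C,M,\eta$ depending only on $\kappa_{*}$.

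Third, I would combine these ingredients and split the $L^\infty$ norm into the regions $|\xi|\le M$ and $|\xi|>M$. On the low-frequency region the weight $|\xi|^2+|\xi|^3$ (and its $|\xi|$-shifted version for (\ref{Conv-kernel-2})) is bounded by a constant, while $t\,e^{\eta t}\le C\,e^{\eta T}$ on $[0,T]$, which already produces the claimed right-hand side. On the high-frequency region the estimate rests on the elementary inequality $t\,e^{-\eta|\xi|^4 t}\le \frac{1}{e\,\eta\,|\xi|^4}$, so that $t\,(|\xi|^2+|\xi|^3)\,e^{-\eta|\xi|^4 t}\le C\big(|\xi|^{-2}+|\xi|^{-1}\big)\le C$ once $M>1$; the additional factor $|\xi|$ required for (\ref{Conv-kernel-2}) is absorbed identically, since $t\,(|\xi|^3+|\xi|^4)\,e^{-\eta|\xi|^4 t}\le C\big(|\xi|^{-1}+1\big)\le C$. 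Multiplying through by $|{\bf a}-{\bf o}|$ and taking the supremum over $t\in[0,T]$ closes both inequalities.

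I expect the main obstacle to be conceptual rather than computational: one must recognize that a crude mean-value estimate producing $e^{-\min(f_{\bf a}t,\,f_{\bf o}t)}$ is useless at low frequencies, where the exponents are negative, and that the correct device is the convex-combination interpolation, whose payoff is precisely that the intermediate exponent $F(\theta)=f_{\bf b}$ remains inside $Q_{*}$ and thereby inherits the uniform dissipative bounds from Lemma \ref{Lem-Ker-1} and Remark \ref{Rmk}. The remaining work is bookkeeping: ensuring that the $|\xi|^2,|\xi|^3$ growth coming from $f_{\bf a}-f_{\bf o}$, together with the extra $|\xi|$ in (\ref{Conv-kernel-2}), is always dominated by the quartic decay $e^{-\eta|\xi|^4 t}$ reinforced by the prefactor $t$, for which the scaling substitution $y=|\xi|^4 t$ renders the boundedness transparent.
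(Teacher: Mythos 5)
Your proposal is correct and follows essentially the same route as the paper: the paper applies the Mean Value Theorem in the parameter ${\bf a}$ (equivalently, your fundamental-theorem-of-calculus representation along the segment, which works because $f_{\bf a}$ is affine in ${\bf a}$ and $Q_{*}$ is convex), bounds the resulting gradient factor $e^{-f_{\bf a}t}(|\xi|^2+|\xi|^3)t$ by splitting into $|\xi|\leq M$ and $|\xi|>M$ via (\ref{Estim-Kernel-Pointwise}) with constants uniform over $Q_{*}$ (Remark \ref{Rmk}), and absorbs the polynomial weights into the quartic decay exactly as you do. Your explicit convexity observation is a welcome clarification of a step the paper leaves implicit, but the argument is the same.
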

\begin{proof} We start by proving the estimate (\ref{Conv-kernel-1}).  For fixed $t$ and $\xi$, we consider the expression $\widehat{K^{\bf a}}(t,\xi)$ given above as a function of ${\bf a}$. Using the Mean Value theorem, we obtain 
\[ \left| \widehat{K^{\bf a}}(t,\xi) - \widehat{K^{\bf o}}(t,\xi)  \right| \leq \sup_{{\bf a}\in Q_{*}} \left| \nabla_{{\bf a}} \widehat{K^{\bf a}}(t,\xi)\right|\, |{\bf a}- {\bf o}|,  \]
where  
\[ \nabla_{\bf a} \widehat{K^\alpha}(t,\xi)= e^{-\left(-(R-\kappa)\xi^2_1+\kappa\, \xi^2_2 -\alpha|\xi|^3 + |\xi|^4 \right) t} \left(\begin{matrix}\vspace{2mm}
\xi^2_1\, t \\ \vspace{2mm}
 -(\xi^2_1+\xi^2_2)t \\
 |\xi|^3 t 
\end{matrix} \right).
 \]

For fixed $0<t\leq T$, and  any ${\bf a}\in Q_{*}$,   $\xi \in \R^2$, we now derive a uniform upper bound for this expression. Using the pointwise estimates given in (\ref{Estim-Kernel-Pointwise}), and recalling that the quantities $C,\eta>0$ and $M>1$ are independent of ${\bf a}\in Q_{*}$, we  write
\begin{equation*}
\begin{split}
 \left| \nabla_{\bf a} \widehat{K^\alpha}(t,\xi) \right| \leq &\,  e^{-\left( -(R-\kappa)\xi^2_1+\kappa\, \xi^2_2 -\alpha|\xi|^3 + |\xi|^4  \right) t} \, \left( |\xi|^2 +|\xi|^3\right)t\\
 \leq &\, \mathds{1}_{\{|\xi|\leq M\}}(\xi)\, e^{-\left( -(R-\kappa)\xi^2_1+\kappa\, \xi^2_2 -\alpha|\xi|^3 + |\xi|^4 \right) t} \left(  |\xi|^2 +|\xi|^3 \right)t \\
 &\, + \mathds{1}_{\{|\xi|> M\}}(\xi)\,e^{-\left( -(R-\kappa)\xi^2_1+\kappa\, \xi^2_2 -\alpha|\xi|^3 + |\xi|^4\right) t} \left(  |\xi|^2 +|\xi|^3 \right)t\\
 \leq &\, C\, e^{\eta\, t}\, t + C\,\mathds{1}_{\{|\xi|> M\}}(\xi)\, e^{-\eta |\xi|^4 t}\left(  |\xi|^2 +|\xi|^3 \right)t\\
 \leq &\, C\, e^{(\eta+1)\,t}+ C\, \mathds{1}_{\{|\xi|> M\}}(\xi)\, e^{-\eta |\xi|^4 t}\, |\xi|^4\, t \\
 \leq &\, C\, e^{(\eta+1)\,t} + C\, \sup_{\xi \in \R^2}\left( e^{-\eta |t^{\frac{1}{4}}\xi|^4}\, | t^\frac{1}{4}\xi|^4  \right)\\
 \leq &\, C\, e^{(\eta+1)\,t} + C. 
\end{split}
\end{equation*}
In the first expression, for simplicity, we will write $\eta$ in place of $\eta +1$. Additionally, note that the second expression is already bounded  by $C\, e^{\eta\, t}$. Hence, we obtain the following uniform bound:
\[  \left| \nabla_{\bf a} \widehat{K^{\bf a}}(t,\xi) \right| \leq C\, e^{\eta\, t}, \]
from which  desired   estimate (\ref{Conv-kernel-1}) follows. 

\medskip

The estimate (\ref{Conv-kernel-2}) follows using very similar ideas. In this case, we write:
\begin{equation*}
\begin{split}
 \left|  |\xi|\, \nabla_{{\bf a}} \widehat{K^{\bf a}}(t,\xi) \right| \leq &\, \mathds{1}_{\{|\xi|\leq M\}}(\xi)\,|\xi|\, e^{-\left(  -(R-\kappa)\xi^2_1+\kappa\, \xi^2_2 -\alpha|\xi|^3 + |\xi|^4 \right) t} \left(  |\xi|^2 +|\xi|^3 \right)t \\
 &\, + \mathds{1}_{\{|\xi|> M\}}(\xi)\,|\xi|\,e^{-\left( -(R-\kappa)\xi^2_1+\kappa\, \xi^2_2 -\alpha|\xi|^3 + |\xi|^4 \right) t} \left(  |\xi|^2 +|\xi|^3 \right)t\\
 \leq &\, C\, e^{\eta\, t}\, t +C\, \mathds{1}_{\{|\xi|> M\}}(\xi)\, e^{-\eta |\xi|^4 t}\left( |\xi|^3+|\xi|^4 \right)t\\
 \leq &\, C\, e^{(\eta+1)\,t}+ C\, \mathds{1}_{\{|\xi|> M\}}(\xi)\, e^{-\eta |\xi|^4 t}\, |\xi|^4\, t,
\end{split}
\end{equation*}
concluding the proof.  \end{proof}

Let $T > 0$ be fixed. With these estimates at hand, and using the integral formulations of $u^{\bf a}(t,x)$ and $u^{\bf o}(t,x)$ given above, we write, for a time $0 < T_0 \leq T$ (to be fixed later) and any $0 < t \leq T_0$:
\begin{equation}\label{Estim-difference}
    \begin{split}
       &\,  \| u^{\bf a}(t,\cdot)-u^{\bf o}(t,\cdot)\|_{H^s} \leq \, \left\| K^{\bf a}(t,\cdot)\ast u^{\bf a}_0 - K^{\bf o}(t,\cdot)\ast u^{\bf o}_0 \right\|_{H^s}\\
        &\, +\int_{0}^{t}\left\| K^{\bf a}(t-\tau,\cdot)\ast \partial_{x_1}(u^{\bf a})^2 (\tau,\cdot)- K^{\bf o}(t-\tau,\cdot)\ast \partial_{x_1}(u^{\bf o})^2(\tau,\cdot)\right\|_{H^s}\, d\tau\\
        =:&\, I({\bf a},t)+J({\bf a},t),
    \end{split}
\end{equation}
where, we must study these  terms separately. 

\medskip

For the term $I({\bf a},t)$, we write
\begin{equation*}
\begin{split}
    I({\bf a},t)\leq &\, \left\| \big( K^{\bf a}(t,\cdot) - K^{\bf o}(t,\cdot)\big)\ast u^{\bf a}_0 \right\|_{H^s}+ \left\| K^{\bf o}(t,\cdot)\ast \big( u^{\bf a}_0 -  u^{\bf o}_0\big) \right\|_{H^s}\\
    =&\, \left\| \big( \widehat{K^{\bf a}}(t,\cdot) - \widehat{K^{\bf o}}(t,\cdot)\big)\, (1+|\xi|^2)^{\frac{s}{2}}\, \widehat{u^{\bf a}_0}\right\|_{L^2}+\left\| \widehat{K^{\bf o}}(t,\cdot)\, (1+|\xi|^2)^{\frac{s}{2}}\, \big( \widehat{u^{\bf a}_0} - \widehat{u^{\bf o}_0} \big) \right\|_{L^2}\\
    \leq &\, \left\|  \widehat{K^{\bf a}}(t,\cdot) - \widehat{K^{\bf o}}(t,\cdot) \right\|_{L^\infty}\, \| u^{\bf a}_0\|_{H^s}+ \left\| \widehat{K^{\bf o}}(t,\cdot)\right\|_{L^\infty}\, \| u^{\bf a}_0 - u^{\bf o}_0 \|_{H^s}. 
\end{split}    
\end{equation*}

To control the first term, we use estimate (\ref{Conv-kernel-1}) together with the fact that, by our assumption (\ref{Convergence-Data}), the family of initial data $\ds{\{ u^{\bf a}_0: \, {\bf a}\in Q_{*}\}}$ is uniformly bounded in the space $H^s(\mathbb{R}^2)$, and we have
\begin{equation*}
  \sup_{{\bf a}\in Q_{*}}\| u^{\bf a}_0 \|_{H^s}\leq C.  
\end{equation*}
Regarding the second term, we again use assumption (\ref{Convergence-Data}) and the bound $\left\| \widehat{K^{\bf o}}(t,\cdot) \right\|_{L^\infty} \leq C\, e^{\eta t}$, which follows from estimate (\ref{Estim-Ker-1}) with $\alpha = 0$ and $\lambda = 0$.

\medskip

Gathering these estimates, we obtain  
\begin{equation}\label{I}
\begin{split}
  I(\alpha,t) \leq &\,  C\, e^{\eta\,t} \, |{\bf a}-{\bf o}| +   C\, e^{\eta\,t} \, |{\bf a}-{\bf o}|^\gamma \leq C\, e^{\eta\,t}\, \max(|{\bf a}-{\bf o}|^\gamma, |{\bf a}-{\bf o}|)\\
  \leq &\, C\, e^{\eta\,T_0}\, \max(|{\bf a}-{\bf o}|^\gamma,|{\bf a}-{\bf o}|) \leq C\, e^{\eta\,T}\, \max(|{\bf a}-{\bf o}|^\gamma, |{\bf a}-{\bf o}|^\gamma).
  \end{split}
\end{equation}

For the term $J({\bf a},t)$, following similar arguments, we write
\begin{equation}\label{J}
    \begin{split}
        J({\bf a},t)\leq &\, \int_{0}^{t}\left\| \, |\xi| \left( \widehat{K^{\bf a}}(t-\tau,\cdot) - \widehat{K^{\bf o}}(t-\tau,\cdot) \right) \right\|_{L^\infty}\, \left\| (u^{\bf a})^2(\tau,\cdot)\right\|_{H^s}\, d\tau\\
        &\, + \int_{0}^{t}\left\| \, |\xi|\, \widehat{K^{\bf o}}(t-\tau,\cdot)  \right\|_{L^\infty}\, \left\| (u^{\bf a})^2(\tau,\cdot)-(u^{\bf o})^2(\tau,\cdot)\right\|_{H^s}\, d\tau\\
        =:&\, J_1({\bf a},t)+J_2({\bf a},t).
    \end{split}
\end{equation}

We begin by studying  the term $J_1({\bf a},t)$. To control the expression $\left\| \, |\xi| \left( \widehat{K^{\bf a}}(t-\tau,\cdot) - \widehat{K^{\bf o}}(t-\tau,\cdot) \right) \right\|_{L^\infty}$, we use  the estimate (\ref{Conv-kernel-2}). Next, to control the  expression $\left\| (u^{\bf a})^2(\tau,\cdot)\right\|_{H^s}$, we note that  since $s>1$,  the space $H^s(\R^2)$ is an algebra of Banach. Using the space  $E_{T_0}$ defined in (\ref{Functional-Space-2}), we obtain
\begin{equation*}
\begin{split}
    J_1({\bf a},t) \leq &\, C\,\int_{0}^{t}e^{\eta(t-\tau)}\,|{\bf a}-{\bf o}| \, \| u^{\bf a}(\tau,\cdot)\|^2_{H^s}\, d\tau\\
    \leq &\, C\,e^{\eta\, t}\,|{\bf a}-{\bf o}|\, \int_{0}^{t} \| u^{\bf a}(\tau,\cdot)\|^2_{H^s}\, d\tau \\
    \leq &\, C\,e^{\eta\, t}\,|{\bf a}-{\bf o}|\,\left( \sup_{0\leq \tau\leq T_0} \| u^{\bf a}(\tau,\cdot)\|_{H^s} \right)^2\,t \\
    \leq &\, C\,e^{(\eta+1)\, t}\,|{\bf a}-{\bf o}|\, \| u^{\bf a} \|^2_{E_{T_0}}
    \end{split}
\end{equation*}
As before, for simplicity, we will write $\eta$ in place of $\eta +1$.  Recall that solutions $u^{\bf a}(t,x)$ were constructed in Proposition \ref{Prop-LWP} by using  Picard's iterative schema.  With this fact, along with the estimate proven in Lemma \ref{Lem-Linear-2} and our assumption (\ref{Convergence-Data}), it follows that 
\begin{equation}\label{Estim-solution-square}
 \| u^{\bf a} \|^2_{E_{T_0}} \leq C\,   \| u^{\bf a}_0 \|^{2}_{H^s} \leq C\,  \left( \sup_{{\bf a}\in Q_{*}} \| u^{\bf a}_0 \|^2_{H^s}\right) \leq  C.     
\end{equation}
and hence, 
\begin{equation}\label{J1}
    J_1({\bf a},t)\leq C\,e^{\eta\, t}\,|{\bf a}-{\bf o}| \leq C\,e^{\eta\, T} \max(|{\bf a}-{\bf o}|^\gamma, |{\bf a}-{\bf o}|). 
\end{equation}

Now, we study the  term $J_2({\bf a},t)$. To control the expression $\left\| \, |\xi| \widehat{K^{\bf o}}(t-\tau,\cdot)  \right\|_{L^\infty}$, we use  the estimate (\ref{Estim-Ker-1}) with $\alpha=0$ and $\lambda=1$. To control the expression $\left\| (u^{\bf a})^2(\tau,\cdot)-(u^{\bf o})^2(\tau,\cdot)\right\|_{H^s}$, using again the uniform bound (\ref{Estim-solution-square}),  we  write
\begin{equation*}
    \begin{split}
        \left\| (u^{\bf a})^2(\tau,\cdot)-(u^{\bf o})^2(\tau,\cdot)\right\|_{H^s}\leq &\, \left\| u^{\bf a}(\tau,\cdot)-u^{\bf o}(\tau,\cdot)\right\|_{H^s}\, \left\| u^{\bf a}(\tau,\cdot)+u^{\bf o}(\tau,\cdot)\right\|_{H^s}\\
        \leq &\, \left\| u^{\bf a}(\tau,\cdot)-u^{\bf o}(\tau,\cdot)\right\|_{H^s}\, \sup_{0\leq \tau\leq T_1} \left( \left\| u^{\bf a}(\tau,\cdot)\right\|_{H^s}+\left\|u^{\bf o}(\tau,\cdot)\right\|_{H^s} \right)\\
         \leq &\, \left\| u^{\bf a}(\tau,\cdot)-u^{\bf o}(\tau,\cdot)\right\|_{H^s}\,  \left( \left\| u^{\bf a}\right\|_{E_{T_0}}+\left\|u^{\bf o}\right\|_{E_{T_0}} \right)\\
        \leq &\, C\, \left\| u^{\bf a}(\tau,\cdot)-u^{\bf o}(\tau,\cdot)\right\|_{H^s}.\\
        \end{split}
\end{equation*}
Hence, we obtain
\begin{equation}\label{J2}
    \begin{split}
        J_2({\bf a},t)\leq &\, C\, \int_{0}^{t}  \frac{e^{\eta (t-\tau)}}{(t-\tau)^{\frac{1}{4}}}\, \left\| u^{\bf a}(\tau,\cdot)-u^{\bf o}(\tau,\cdot)\right\|_{H^s}\, d\tau \\
        \leq &\, C\,e^{\eta\, t} \left( \int_{0}^{t}\frac{d \tau}{(t-\tau)^{\frac{1}{4}}}\right)\, \sup_{0\leq \tau \leq T_0} \left\| u^{\bf a}(\tau,\cdot)-u^{\bf o}(\tau,\cdot)\right\|_{H^s}\\
        \leq &\, C\, e^{\eta\, t}t^{\frac{3}{4}}\, \sup_{0\leq \tau \leq T_0} \left\| u^{\bf a}(\tau,\cdot)-u^{\bf o}(\tau,\cdot)\right\|_{H^s}\\
        \leq &\, C\, e^{\eta\, T}T^{\frac{3}{4}}_0\, \sup_{0\leq \tau \leq T_0} \left\| u^{\bf a}(\tau,\cdot)-u^{\bf o}(\tau,\cdot)\right\|_{H^s}.
    \end{split}
\end{equation}

Gathering estimates (\ref{J1}) and (\ref{J2}) into expression (\ref{J}), we conclude
\begin{equation}\label{J-bis}
    J({\bf a},t)\leq C\,e^{\eta\, T} \max(|{\bf a}-{\bf o}|^\gamma, |{\bf a}-{\bf o}|)+ C\, e^{\eta\, T}T^{\frac{3}{4}}_0\, \sup_{0\leq \tau \leq T_0} \left\| u^{\bf a}(\tau,\cdot)-u^{\bf o}(\tau,\cdot)\right\|_{H^s}.
\end{equation}

With estimates (\ref{I}) and (\ref{J-bis}) at hand, returning to expression (\ref{Estim-difference}), it follows that
\begin{equation*}
\begin{split}
\sup_{0 \leq t \leq T_0} \left\| u^{\bf a}(t, \cdot) - u^{\bf o}(t, \cdot) \right\|_{H^s} \leq  &\, C\, e^{\eta T} \max(|{\bf a}-{\bf o}|^\gamma, |{\bf a}-{\bf o}|)\\
&\, + C\, e^{\eta T} T_0^{\frac{3}{4}} \sup_{0 \leq t \leq T_0} \left\| u^{\bf a}(t, \cdot) - u^{\bf o}(t, \cdot) \right\|_{H^s}.
\end{split}
\end{equation*}

We now fix $T_0$ sufficiently small so that $\ds{C\, e^{\eta\, T}T^{\frac{3}{4}}_0}\leq \frac{1}{2}$. This yields
\begin{equation*}
\frac{1}{2} \sup_{0 \leq t \leq T_0} \left\| u^{\bf a}(t, \cdot) - u^{\bf o}(t, \cdot) \right\|_{H^s} \leq C\, e^{\eta T} \max(|{\bf a}-{\bf o}|^\gamma, |{\bf a}-{\bf o}|).
\end{equation*}

Finally, by iterating this argument over successive subintervals covering the full time interval $[0,T]$, we obtain the desired estimate (\ref{Convergence-Solutions}). This completes the proof of Theorem \ref{Th:Nonlocal-to-local}.

\medskip 

\paragraph{\bf Acknowledgements.} 
The first and the second author  were partially supported by  MathAmSud WAFFLE 23-MATH-18.

\medskip

\paragraph{{\bf Statements and Declaration}}
Data sharing does not apply to this article as no datasets were generated or analyzed during the current study.  In addition, the authors declare that they have no conflicts of interest, and all of them have equally contributed to this paper.

\medskip

\end{document}